\documentclass[reqno]{amsart}
\usepackage{amsmath,amssymb,amsthm}
\usepackage[cmtip,color,all]{xy}
\xyoption{tips}
\usepackage{parskip}
\SelectTips{cm}{10}
\usepackage[dvipsnames,table,xcdraw]{xcolor}
\usepackage{enumitem}
\usepackage{mathtools}
\usepackage{multirow}
\usepackage{microtype}
\usepackage{booktabs}
\usepackage{caption}
\usepackage{diagbox}
\usepackage{mathrsfs}
\usepackage{hyperref}
\usepackage{tikz-cd}
\usetikzlibrary{arrows.meta}
\usetikzlibrary{shapes.geometric}
\usepackage{bbold}
\usepackage{floatrow}
\usepackage{pifont}
\usepackage{comment}
\usepackage{geometry}
\usepackage{setspace}
\usepackage{stmaryrd,xcolor,soul,tikz}
\usepackage{pdfpages}
\usepackage{multirow}

\definecolor{gold}{rgb}{1.0, 0.87, 0.4}

\newcommand{\C}{\mathcal{C}} 
\newcommand{\T}{\mathcal{T}} 
\newcommand{\K}{\mathcal{K}}

\newcommand{\AF}{\mathsf{AF}}
\newcommand{\AC}{\mathsf{AC}}
\newcommand{\W}{\mathsf{W}}

\newcommand{\F}{\mathsf{F}}
\newcommand{\Cof}{\mathsf{C}}

\usepackage{tikz}
\usetikzlibrary{positioning,arrows}
\usetikzlibrary{shapes.geometric, calc}
\usetikzlibrary{decorations.pathreplacing}

\newtheorem{theorem}{Theorem}[section]
\newtheorem{corollary}[theorem]{Corollary}
\newtheorem{lemma}[theorem]{Lemma}
\newtheorem{proposition}[theorem]{Proposition}

\newtheorem*{theorem*}{Theorem}

\theoremstyle{definition}
\newtheorem{definition}[theorem]{Definition}
\newtheorem{example}[theorem]{Example}
\newtheorem{remark}[theorem]{Remark}

\newtheorem{construction}[theorem]{Construction}
\usepackage{hyperref}

\definecolor{dark-red}{rgb}{0.5,0.15,0.15}
\definecolor{dark-blue}{rgb}{0.15,0.15,0.6}
\definecolor{dark-green}{rgb}{0.15,0.6,0.15}
\hypersetup{
    colorlinks, linkcolor=dark-red,
    citecolor=dark-blue, urlcolor=dark-green
}

\definecolor{gRed}{HTML}{ff5100}
\definecolor{gGreen}{HTML}{2b83ba}

\usepackage{fancyhdr}           
\fancypagestyle{plain}{
    \lhead{}
    \fancyhead[R]{\thepage}
    \fancyhead[L]{}
    
    \fancyfoot{}
}

\usepackage[textwidth=3cm, textsize=small, colorinlistoftodos]{todonotes}

\allowdisplaybreaks
\graphicspath{{./images/}}

\title{Bousfield Localizations on the Nonmodular Lattice $N_5$}
\author{Sofía Martínez Alberga}
\address{Bryn Mawr College, Bryn Mawr, PA, USA}
\email{smartineza@brynmawr.edu}
\author{Constanze Roitzheim}
\address{University of Kent, SMSAS, Canterbury CT2 7FS, UK}
\email{c.roitzheim@kent.ac.uk}
\date{\today}

\begin{document}
\begin{abstract}
We provide a complete description of the model category structures on the nonmodular lattice $N_5$. 
Furthermore we explain how these model category structures are related to each other via Bousfield localization. 
This work heavily relies on the use of combinatorical objects from equivariant homotopy theory known as \emph{transfer systems}, and it results in a wealth of interesting interactions between combinatorial and homotopical methods.
\end{abstract}
\maketitle
\section{Introduction}
Model category structures supply us with a fundamental language for homotopy theory. Since their original inception in the 1960's, they have become invaluable for formalizing phenomena in rational, stable, and equivariant homotopy, which classically requires extensive background knowledge about the specific underlying categories of e.g. spaces or spectra. If instead one considers model category structures on a finite lattice, one strips model categories from those elaborate technical prerequisites and brings the model category techniques themselves into the foreground. 

To discuss model structures on a finite lattice $\mathcal{P}$, \emph{transfer systems} play an important role. A transfer system is a subcategory of a lattice that contains the all objects of $\mathcal{P}$ and is closed under pullbacks.
These special subcategories prove to be a central tool for the study of equivariant homotopy commutativity \cite{Blumberg-Hill}, and also link to other rich combinatorial structures \cite{BBR} \cite{BOOR}.
A model category structure on $\mathcal{P}$ determines and is determined by a pair $\W \supseteq \mathcal{T}$, where $\W$ is a wide decomposable subcategory and $\mathcal{T}$ is a {transfer system}, subject to further restrictions specified in \cite{MORSVZ2}. 
In this case, $\W$ plays the role of the weak equivalences and $\mathcal{T}=\AF$ the role of the acyclic fibrations of the model structure.  
Therefore, not only do transfer systems on finite lattices provide bridges between seemingly unrelated concepts, they also shift the focus away from abstract set theory to more manageable, ``hands-on'' constructions.

One such construction on model categories is \emph{Bousfield localization}, which provides a formal framework for modifying an existing model structure into a new model structure by incrementing the class of weak equivalences. 
A common occurrence of Bousfield localization in topology is localizing a category of topological spaces or spectra by changing the weak equivalences from weak homotopy equivalences to $E_*$-isomorphisms for a suitable homology theory $E_*$.
In the context of finite lattices, the effect of left and right localization on a pair $(\W,\AF)$ has been determined in \cite{CGMNRT}. A part of this work shows that while left Bousfield localization of course leaves $\AF$ unchanged, right Bousfield localizations on $\mathcal{P}$ are classified by a minimal generating arrows called \emph{Golden Arrows} which depend only on $\W$. The present article exemplifies another instance of Golden Arrows on a lattice that is not rectangular or even modular.

The specific lattice $N_5$ considered in this paper is relatively small, but is \emph{nonmodular}, meaning that it fails to satisfy crucial distributive properties.
While nonmodular lattices are harder to work with in practice than their nicely-behaved modular counterparts, they do make a frequent appearance in homotopy theory.
To start, in homotopy-coherent structures, the lattice $N_5$ appears as the $3$-\emph{Tamari lattice}.
Recall that the $n$-Tamari lattice this is the poset consisting of sequences of $(n+1)$-letters, where the order is induced by applications of the associative law $((xy)z) \rightarrow (x(yz))$. 
Furthermore the $n$-Tamari lattice is isomorphic to the poset of transfer systems on a total order $[n]$, see \cite{BBR}. As a direct consequence, if a finite lattice $\mathcal{P}$ contains $[2]$, its poset of transfer systems $\mathrm{Tr}(\mathcal{P})$ will always be nonmodular. 
Additionally it was exhibited by \cite{KLMSV} that the subgroup lattice Sub($G$) is nonmodular for numerous interesting classes of nonabelian groups $G$ such as dihedral groups and alternating groups. 
Therefore, in order to understand $G$-equivariant stable homotopy theory for these groups, it is vital to understand transfer systems on $N_5$ and their homotopy-theoretic interactions. 

Our main result is the following, which appears as Theorem \ref{thm:allthelocals}.

\begin{theorem*}
Every model category structure on $N_5$ can be obtained as a sequence of left and right Bousfield localizations of the trivial model structure.
\end{theorem*}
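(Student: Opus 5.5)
The plan is to work entirely with the combinatorial description of model structures recalled in the introduction. By \cite{MORSVZ2}, a model structure on $N_5$ is the same as a compatible pair $(\W,\AF)$, where $\W$ is a wide decomposable subcategory and $\AF=\mathcal{T}\subseteq \W$ is a transfer system. By \cite{CGMNRT}, left Bousfield localization keeps $\AF$ fixed and enlarges $\W$. Right Bousfield localization instead enlarges $\W$ while shrinking $\AF$; it is governed by the Golden Arrows of $\W$, and is determined by which Golden Arrows one adds.

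The trivial model structure has $\W$ equal to the identities, so $\AF$ is also the identities. The key observation is that any pair $(\W,\AF)$ can be reached in two moves. First, pass from the trivial structure to a structure whose acyclic fibrations are exactly $\AF$. Second, left-localize to enlarge the weak equivalences to $\W$, which keeps $\AF$ unchanged.

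For the first move, I would aim to show that every transfer system $\mathcal{T}$ on $N_5$ is $\AF$ of some model structure $(\W',\mathcal{T})$ obtained from the trivial one by localizations. The simplest candidate is $\W'=\mathcal{T}$ itself, closed up to a decomposable subcategory if necessary. I would then produce $(\W',\mathcal{T})$ from the trivial structure in stages:
\begin{itemize}
\item left-localize the trivial structure to $(\W'',\mathrm{id})$, with $\W''$ as large as needed (for instance all of $N_5$);
\item right-localize to cut $\AF$ down or up appropriately, or alternatively chain localizations through intermediate transfer systems.
\end{itemize}
Since right localization makes $\AF$ smaller while the trivial structure starts with the smallest possible $\AF$, the order of operations matters. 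It is more natural to first left-localize to $\W=N_5$, where every transfer system can occur as $\AF$, and then right-localize. But for a fixed $\W$ a right localization only lowers $\AF$, and left localization never changes $\AF$. So the real route is probably the reverse: start from the trivial structure, left-localize to the pair (all of $N_5$, trivial $\AF$), and then right-localize. Here I need \cite{CGMNRT} to say that right localization lets $\AF$ range over transfer systems contained in the new $\W$, each determined by Golden Arrows.

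Concretely, I would enumerate the finitely many transfer systems on $N_5$ and the wide decomposable subcategories (the latter presumably classified earlier in the paper). For each admissible pair I would exhibit an explicit zig-zag: left-localize from $(\mathrm{id},\mathrm{id})$ to $(\W_0,\mathrm{id})$, right-localize to $(\W_1,\mathcal{T})$ with $\W_1\subseteq\W$ generated by the Golden Arrows of the right localization, and finally left-localize to $(\W,\mathcal{T})$. I would organize this as a table indexed by $\mathcal{T}$, which is the form the paper's Theorem \ref{thm:allthelocals} likely takes.

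The main obstacle is the middle step: checking that for each transfer system $\mathcal{T}$ there is some $\W_1\supseteq\mathcal{T}$ such that $(\W_1,\mathcal{T})$ is a valid model structure reachable as a right localization. This means verifying the compatibility conditions of \cite{MORSVZ2}, namely that cofibrations and fibrations determined by $\mathcal{T}$ and $\W_1$ factor correctly. The nonmodularity of $N_5$ (its chain of length two alongside an incomparable element) is exactly where pullback-closure and decomposability might clash. I would first try $\W_1$ equal to the smallest decomposable subcategory containing $\mathcal{T}$, and check the Golden Arrow criterion case by case.
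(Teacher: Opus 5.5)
There is a genuine gap, and it comes from getting the direction of right localization backwards. Right localization keeps the fibrations fixed, $R_S(\F)=\F$, while enlarging $\W$, so $R_S(\AF)=\F\cap R_S(\W)\supseteq \AF$. In other words, acyclic fibrations can only \emph{grow}. For a short arrow $f$, Theorem~\ref{thm:goldenarrows} says they become exactly $\langle \AF\cup\Gamma_f\rangle_{TS}$, which is determined by $(\W,\AF,f)$; $\AF$ does not get to ``range over'' transfer systems in the new $\W$. Moreover, $\Gamma_f$ is read off from the old $\W$-components that get merged, so it is empty whenever $R_f(\W)=\W$.

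Hence your ``real route'' is a dead end. Both kinds of localization only enlarge $\W$, so once you have left-localized to $(\mathrm{ALL},\mathrm{id})$, every subsequent left or right localization is the identity, and nothing else is reachable. The principle you need is the reverse of the one you used. Along any chain, $\AF$ is monotone, and it can only increase at a right-localization step that actually merges weak equivalence components. So the acyclic fibrations must be built up by right localizations while $\W$ is still small enough that the maximal elements of the merged components produce the desired Golden Arrows. The remaining weak equivalences are then added by left localizations, afterwards or in between.

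Second, your concrete template (left-localize to $(\W_0,\mathrm{id})$, one right localization to $(\W_1,\T)$, then left-localize) is not verified. With a single right localization at a short arrow, it is also too rigid. (Note too that $\W_1=R_f(\W_0)$ comes from Construction~\ref{con:wchanges}, not from the Golden Arrows.) For example, take $\W=\{0\to A,\,0\to B,\,C\to 1\}$ and $\AF=\{0\to A,\,0\to B\}$. This $\AF$ is $\T_{min}$ of $\W$, so the pair is a model structure. The only $\W_0\subseteq\W$ admitting $\AF=\mathrm{id}$ are $\mathrm{id}$ and $\{C\to 1\}$. One right localization from either of these yields $\AF=\{0\to A\}$, $\AF=\{0\to B\}$, or an $\AF$ containing $C\to 1$. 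To reach the target you must right-localize at $0\to A$ and then at $0\to B$, and only afterwards left-localize at $C\to 1$.

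So the theorem really does require an explicit chain of interleaved localizations at short arrows for each of the $70$ model structures. Along each chain, $\W$ is tracked via Construction~\ref{con:wchanges} and $\AF$ via the Golden Arrows. This is exactly what the paper's proof does, assembled in Figure~\ref{fig:tree}. It is also precisely the step your proposal leaves as an unperformed ``case by case'' check.
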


This new understanding of the behaviour of Bousfield localizations on $N_5$ will add a useful puzzle piece to the behaviour of Bousfield localizations on finite lattices in general.

\subsection*{Organization}
This paper is organized as follows. In Section \ref{sec:lattices} we introduce $N_5$ and recall some basic lattice terminology. Section \ref{sec:transfers} develops the lattice of transfer systems on $N_5$, as well as the dual notion known as cotransfer systems. In this section we will see that the fact that pushouts of indecomposable arrows are not necessarily indecomposable again is the first major practical obstacle of working with a nonmodular lattice. Section \ref{sec:models} recalls the relationship between transfer systems and model structures, which allows us to systematically determine all model structures on $N_5$ as well as draw some first conclusions from this. Finally, in Section \ref{sec:bousfield} we explore the effect of Bousfield localizations on our previous calculations, which also demonstrates the application of the Golden Arrows in a non-modular context.

\subsection*{Acknowledgments}
The first author would like to thank Bryn Mawr College of the institutional support. 
The second author would like to thank the organizers of the Abel Symposium 2025 for the invitation and travel support.
\section{Nonmodular lattices}\label{sec:lattices}

In this section we introduce the lattice $N_5$ and recall some of its properties that we will require in later sections.

\begin{definition}
A \emph{lattice} is a poset such that each two objects have a unique \emph{meet} and \emph{join} in the following sense.
\begin{itemize} 
\item Meet: $X \vee Y$ is the smallest $Z$ with $X \leq Z$ and $Y \leq Z$,
\item Join: $X \wedge Y$ is the largest $Q$ with $Q \leq X$ and $Q \leq Y$.
\end{itemize} 
\end{definition}

We will use the notation $X \leq Y$ and $X \rightarrow Y$ interchangeably. Furthermore,
in a lattice, any square of the form 
\[
\xymatrix{ X \ar[r]  & X \vee Y \\
A \ar[r] \ar[u] & Y \ar[u]
}
\]
is always a pushout square, and
\[
\xymatrix{ X \ar[r]  & B \\
X \wedge Y \ar[r] \ar[u] & Y \ar[u]
}
\]
is always a pullback square.

The lattice $N_5$ is the pentagon depicted in the Figure \ref{fig:introducingn5}.
\begin{figure}[H]
    \centering
    \begin{tikzpicture}    
\node (X) at (1.4265848,-0.4635254) {0};
\node (B) at (2.3082627, 0.75) {B};
\node (A) at (0,0) {A};
\node (C) at (0,1.5) {C};
\node (Y) at (1.4265848,1.9635254) {1};
\draw[thick, ->](X) edge (A);
\draw[thick, ->](X) edge (B);
\draw[thick, ->](X) edge (C);
\draw[thick, ->](X) edge (Y);
\draw[thick, ->](A) edge (C);
\draw[thick, ->](A) edge (Y);
\draw[thick, ->](B) edge (Y);
\draw[thick, ->](C) edge (Y);
\end{tikzpicture}
    \caption{The lattice $N_5$.}
    \label{fig:introducingn5}
\end{figure}

Its objects are $0, A, B, C$ and $1$ with $0 < A < C <1$, $0<B<1$ and $B$ is not comparable to $A$ or $C$. 
For simplicity, we will omit the names of the objects in the majority of the subsequent images.
Furthermore we call a morphism $f$ in poset \emph{short} if it is indecomposable, i.e. $f= g \circ h$ implies that either $g$ or $h$ is the identity.
In Table \ref{tab:pushoutspullbacks},
the full list of nontrivial pushouts and pullbacks is given and note that the six short morphisms in $N_5$ are listed first.
\begin{center}
\begin{table}[h]
\begin{tabular}{ |p{4cm}||p{4cm}|p{4cm}|  }
 \hline
 Arrow & Pushout & Pullback \\
 \hline
 \hline
 $0 \rightarrow A$ & $B \rightarrow 1$ & $\emptyset$ \\
  $A \rightarrow C$ & $\emptyset$ & $\emptyset$\\
 $C \rightarrow 1$ & $\emptyset$ &  $0 \rightarrow B$ \\
 $0 \rightarrow B$ & $A \rightarrow 1$, $C \rightarrow 1$ & $\emptyset$ \\
 $B \rightarrow 1$ & $\emptyset$ & $0 \rightarrow A$, $0 \rightarrow C$  \\
 \hline
 $A \rightarrow 1$ & $C \rightarrow 1$ & $0 \rightarrow B$, $A \rightarrow C$ \\
 $0 \rightarrow C$ & $B \rightarrow 1$, $A \rightarrow C$ &$0 \rightarrow A$ \\
 \hline
 $0 \rightarrow 1$ &  $A \rightarrow 1$,  $B \rightarrow 1$, $C \rightarrow 1$ & $0 \rightarrow A$, $0 \rightarrow B$, $0 \rightarrow C$ \\
 \hline
\end{tabular}
\caption{Pushouts and pullbacks in $N_5$}
\label{tab:pushoutspullbacks}
\end{table}
\end{center}
This pentagon is the smallest example of a \emph{nonmodular lattice}. Recall a lattice is \emph{modular} or \emph{distributive} if it does not contain $N_5$ as a sublattice.
The following example illustrates this nuance in detail.

\begin{example}
The lattice $[2] \times [1]$, pictured below,
\begin{figure}[h]
    \centering
    \begin{tikzpicture}[scale=1.1]
 \foreach \x in {1,2, 3}
 \foreach \y in {1,2}
    \fill (\x,\y) circle (1mm);
\node (A) at (1,2) {};
\node (B) at (2,2) {};
\node (C) at (3,2) {};
\node (D) at (1,1) {};
\node (E) at (2,1) {};
\node (F) at (3,1) {};

\draw[thick, ->](A) edge (B);
\draw[thick, dashed, ->](D) edge (E);
\draw[thick, ->](D) edge (C);
\draw[thick, ->](D) edge (A);
\draw[thick, ->](E) edge (F);
\draw[thick, dashed, ->](D) edge[bend right] (F);
\draw[thick, ->](D) edge (B);
\draw[thick, ->](A) edge[bend left] (C);
\draw[thick, dashed,->](B) edge (C);
\draw[thick, ->](E) edge (C);
\draw[thick, ->, dashed,](F) edge (C);
\draw[thick, dashed, ->](E) edge (B);
\end{tikzpicture}
    \label{fig:21}
\end{figure}
contains $N_5$ as a subposet in the form of the dashed arrows, with 
\[
0 \mapsto (0,0),\,\, A \mapsto (0,1),\,\, C \mapsto (1,1),\,\, B \mapsto (2,0),\,\, 1 \mapsto (2,1).
\]
However, this does not make $N_5$ a sublattice as $B \wedge C = 0$ but $(1,1) \wedge (2,0)=(1,0)$.
\end{example}

An equivalent definition of modularity is that a lattice is modular if and only if it satisfies the \emph{modular law} or \emph{distributive law}, namely that
\[
x \leq y \Rightarrow x \vee (a \wedge y) = (x \vee a) \wedge b.
\]

In practice, many proofs and situations assume lattices to be modular, and therefore nonmodular lattice often require special treatment. Before our next remark on modularity, let us introduce the following convenient terminology. 

\begin{definition}
We say that an arrow $f$ in a lattice is \emph{short} if it is indecomposable, i.e. $f=f_2 \circ f_1$ implies that either $f_1$ or $f_2$ is the identity. 
\end{definition}

\begin{remark}
A lattice is modular if and only if the pushout of a short arrow is again short. We see that this is clearly not the case for $N_5$. Conversely, assume that we have a lattice where the arrow $x \rightarrow y$ has a pushout $p \rightarrow q$ that factors as $p \rightarrow z \rightarrow q$. Firstly, this already implies that $y$ and $z$ are not comparable, because otherwise the pushout would not have target $q$. 
Then, even though $p \leq z$, we have
\[
p \vee (y \wedge z) = p \neq z = (p \vee y) \wedge z,
\]
which violates the distributive law.
\end{remark}

\section{Transfer systems}\label{sec:transfers}
Transfer systems are a combinatorial structure naturally arising in equivariant topology and also, as we will see in the next section, in model categories. First, let us recall the definition and some examples, see also \cite[Definition 2.6]{MORSVZ2}. 
\begin{definition}
A \emph{transfer system} on a category $\mathcal{C}$ is a wide subcategory closed under pullbacks. 
If $\mathcal{C}$ is furthermore a finite lattice, then we denote by $\mathrm{Tr}(\C)$ the poset of all transfer systems on $\C$ ordered by inclusion.
\end{definition}

\begin{example}
To visualise this definition, let us look at the total order $[2]$. Let $\T$ be a transfer system on $[2]$. By definition $\T$ is a subcategory and therefore closed under composition. Consequently, if $0\rightarrow 1$ and $1 \rightarrow 2$ are both in $\T$, then so is $0 \rightarrow 2$.
Also by definition $\T$ is closed under pullbacks, so if $0 \rightarrow 2 \in \T$, then $0 \rightarrow 1 \in \T$. Taking those facts into account, one can work out that there are precisely five transfer systems on $\T$ listed in Figure \ref{fig:uretson2}.
\begin{figure}[h]
\begin{tikzpicture}[scale=1.1]
 \draw[blue!30, rounded corners] (0.75,0.75) rectangle (3.25,1.25);
 \foreach \x in {1,2, 3}
 \foreach \y in {1}
    \fill (\x,\y) circle (1mm);
\node (A) at (1,1) {};
\node (D) at (2,1) {};
\node (H) at (3,1) {};
\end{tikzpicture}
\quad
\begin{tikzpicture}[scale=1.1]
 \draw[blue!30, rounded corners] (0.75,0.75) rectangle (3.25,1.25);
 \foreach \x in {1,2, 3}
 \foreach \y in {1}
    \fill (\x,\y) circle (1mm);
\node (A) at (1,1) {};
\node (D) at (2,1) {};
\node (H) at (3,1) {};
\draw[thick, ->](A) edge (D);
\end{tikzpicture}
\quad 
\begin{tikzpicture}[scale=1.1]

 \draw[blue!30, rounded corners] (0.75,0.75) rectangle (3.25,1.25);
 \foreach \x in {1,2, 3}
 \foreach \y in {1}
    \fill (\x,\y) circle (1mm);
\node (A) at (1,1) {};
\node (D) at (2,1) {};
\node (H) at (3,1) {};
\draw[thick, ->](D) edge (H);
\end{tikzpicture}
\quad
\begin{tikzpicture}[scale=1.1]

 \draw[blue!30, rounded corners] (0.75,0.75) rectangle (3.25,1.5);
 \foreach \x in {1,2, 3}
 \foreach \y in {1}
    \fill (\x,\y) circle (1mm);
\node (A) at (1,1) {};
\node (D) at (2,1) {};
\node (H) at (3,1) {};
\draw[thick, ->](A) edge (D);
\draw[thick, ->](A) edge[bend left]  (H);
\end{tikzpicture}
\quad 
\begin{tikzpicture}[scale=1.1]

 \draw[blue!30, rounded corners] (0.75,0.75) rectangle (3.25,1.5);
 \foreach \x in {1,2, 3}
 \foreach \y in {1}
    \fill (\x,\y) circle (1mm);
\node (A) at (1,1) {};
\node (D) at (2,1) {};
\node (H) at (3,1) {};
\draw[thick, ->](A) edge (D);
\draw[thick, ->](A) edge[bend left]  (H);
\draw[thick, ->](D) edge (H);
\end{tikzpicture}
\caption{The transfer systems on $[2]$}
\label{fig:uretson2}
\end{figure}

\end{example}

\begin{example}
With the information in Table \ref{tab:pushoutspullbacks}, we can work out what the transfer systems on the pentagon $N_5$ are. We display them in Figure \ref{fig:TSN5}. We will learn about the role of the eight transfer systems marked with a red asterisk in the right lower corner later in this section.

\begin{figure}[H]
    \centering
    \scalebox{.75}{\input{transferlattice2.tex}}
    \caption{The 26 transfer systems on $N_5$, ordered by inclusion.}
    \label{fig:TSN5}
\end{figure}

\end{example}

Next, we define the dual concept to a transfer system.

\begin{definition}
A \emph{cotransfer system} on a category $\C$ is a wide subcategory closed under pushouts. 
\end{definition}
 
Before we summarize some duality properties between transfer and cotransfer systems, let us introduce some more notation.

\begin{definition}
Let $S$ be a set of morphisms in $\C$.
We say that a morphisms $f \in \C$ has the \emph{left lifting property (LLP) with respect to $S$} if for every $s \in S$ and for every commutative diagram of the form
\[
\xymatrix{A \ar[r] \ar[d]_f & X \ar[d]^s\\
B \ar[r] & Y 
}
\]
there is a morphism $B \rightarrow X$ (a ``lift'') making the resulting diagram commute. We write 
\[
{}^\boxslash S = \{ f \in \C \,\,|\,\,\mbox{$f$ has the LLP with respect to $S$} \}.
\]
We define the \emph{right lifting property with respect to $S$} and the set $S^\boxslash$ analogously.
\end{definition}

From these definitions, one can conclude the following, see also \cite[Proposition 4.2]{FOOQW}.

\begin{proposition}
The set $\T$ is a transfer system on $\C$ if and only if ${}^\boxslash \T$ is a cotransfer system on $\C$. Dually, $\K$ is a cotransfer system on $\C$ if and only if ${\K}^\boxslash$ is a transfer system on $\C$.
\end{proposition}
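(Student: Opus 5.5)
The plan is to reduce everything to one general fact: for \emph{any} class $S$ of morphisms in $\C$, the class ${}^\boxslash S$ is a cotransfer system, and dually $S^\boxslash$ is a transfer system. The forward implications of the Proposition are special cases of this fact. The converse implications need an extra ingredient, discussed in the last paragraph.

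\textbf{Step 1: lifting in a poset.} Since $\C$ is a poset, every square commutes automatically, and lifts are unique when they exist. So $f\colon a\to b$ has the LLP with respect to $s\colon x\to y$ if and only if
\[
(a\le x \text{ and } b\le y)\ \Rightarrow\ b\le x .
\]
I will use this reformulation throughout.

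\textbf{Step 2: ${}^\boxslash S$ is a cotransfer system, for every $S$.}
\begin{itemize}
\item \emph{Wide.} Every identity $a\to a$ satisfies the criterion of Step 1 trivially, so ${}^\boxslash S$ contains all objects.
\item \emph{Composition.} Take $a\to b\to c$ with both pieces in ${}^\boxslash S$, and suppose $a\le x$ and $c\le y$. Then $b\le c\le y$, so the first piece gives $b\le x$. Applying the second piece to $b\le x$ and $c\le y$ gives $c\le x$.
\item \emph{Pushouts.} The pushout of $f\colon a\to b$ along $a\to p$ is $p\to p\vee b$. Suppose $p\le x$ and $p\vee b\le y$. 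Then $a\le p\le x$ and $b\le y$, so $f$ gives $b\le x$. Hence $p\vee b\le x$, as required.
\end{itemize}
The dual argument, using pullbacks $q\wedge a\to q$ and meets, shows that $S^\boxslash$ is always a transfer system. In particular:
\begin{itemize}
\item if $\T$ is a transfer system, then ${}^\boxslash\T$ is a cotransfer system;
\item if $\K$ is a cotransfer system, then $\K^\boxslash$ is a transfer system.
\end{itemize}

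\textbf{Step 3: the converse implications.} Step 2 shows that the hypothesis ``${}^\boxslash\T$ is a cotransfer system'' holds for every class $\T$. So this hypothesis alone cannot force $\T$ to be closed under pullbacks. The converse must therefore be read, as in \cite[Proposition 4.2]{FOOQW}, for classes that are determined by their lifting properties, i.e.\ classes of the form $\T=\K^\boxslash$ (dually $\K={}^\boxslash\T$). For such a class, Step 2 applied to $S=\K$ shows directly that $\T$ is a transfer system. The cotransfer statement is handled dually.

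I expect the main obstacle to be exactly this point: pinning down the intended meaning of the converse. That in turn requires showing that a transfer system is recovered from its lifting class, $\T=({}^\boxslash\T)^\boxslash$. The inclusion $\T\subseteq({}^\boxslash\T)^\boxslash$ is formal. For the reverse inclusion, take $s\colon x\to y$ not in $\T$. The plan is to exhibit a witness $f\in{}^\boxslash\T$ that fails to lift against $s$.
\begin{itemize}
\item First candidate: the arrow $x\to y$ itself. It fails to lift against $s$, since lifting would require $y\le x$.
\item It does lie in ${}^\boxslash\T$. Suppose $t\colon x'\to y'$ is in $\T$ with $x\le x'$ and $y\le y'$. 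The pullback of $t$ along $y\to y'$ is $y\wedge x'\to y$, which lies in $\T$. If $y\not\le x'$, then I need to derive that $s\in\T$.
\end{itemize}
The step I would try first to reach that conclusion is closure under restriction: since $x\le y\wedge x'$, the pullback of $y\wedge x'\to y$ along $x\to y$ is $x\to x$, so this route does not by itself produce $s$. If it does not close up, I would instead check the identity $\T=({}^\boxslash\T)^\boxslash$ directly on the 26 transfer systems of Figure~\ref{fig:TSN5}, using Table~\ref{tab:pushoutspullbacks}.
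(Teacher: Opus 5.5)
Your Steps 1 and 2 are correct, and they cover everything the paper itself supplies here: it gives no argument beyond citing \cite[Proposition 4.2]{FOOQW}, and lifting classes are automatically wide and closed under composition and under cobase change (resp.\ base change). You are also right that the literal converse carries no information, because ${}^\boxslash S$ is a cotransfer system for \emph{every} class $S$. The substantive content is that $\T\mapsto{}^\boxslash\T$ and $\K\mapsto\K^\boxslash$ are mutually inverse, i.e.\ $\T=({}^\boxslash\T)^\boxslash$ and $\K={}^\boxslash(\K^\boxslash)$. The paper needs exactly this for its very next sentence, that a finite lattice has as many transfer systems as cotransfer systems.

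This identity is where your proposal has a genuine gap. It is your witness that fails, not just the argument for it: for $s\colon x\to y\notin\T$, the arrow $s$ is in general \emph{not} in ${}^\boxslash\T$. On $N_5$, take $\T=\{A\to C\}$ plus identities, which is a transfer system because $A\to C$ has no nontrivial pullbacks, and take $s\colon 0\to C$. The square $0\le A$, $C\le C$ against $A\to C$ has no lift, since $C\not\le A$.

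The missing idea is a factorization. Let $Z=\{z \mid x\le z\le y,\ z\to y\in\T\}$. It contains $y$ and is closed under meets: pull back $z_1\to y$ along $z_2\to y$ and compose with $z_2\to y$. So in a finite lattice $Z$ has a least element $z_0$. Then $x\to z_0\in{}^\boxslash\T$. Indeed, given $t\colon a\to b$ in $\T$ with $x\le a$ and $z_0\le b$, pull $t$ back along $z_0\to b$ and compose with $z_0\to y$. Since $x\le z_0\wedge a$, this puts $z_0\wedge a$ in $Z$, so $z_0\le a$ by minimality. Now if $s\notin\T$, then $z_0\neq x$. The square $x\le x$, $z_0\le y$ then shows that $x\to z_0$ does not lift against $s$, so $s\notin({}^\boxslash\T)^\boxslash$. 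In the example above, $z_0=A$ and the witness is $0\to A$.

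The dual argument uses the largest $z\in[x,y]$ with $x\to z\in\K$ and gives $\K={}^\boxslash(\K^\boxslash)$. Your fallback of checking the 26 transfer systems by hand would at best settle the case $\C=N_5$. The statement, and the counting consequence, concern general finite lattices, and finiteness is exactly what the minimal-element argument uses.
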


In particular, if $\C$ is a finite lattice, there are as many transfer systems as there are cotransfer systems on $\C$.

\begin{example}
Table \ref{fig:transfersandcotransfers} is a list of all transfer systems $\T$ on the pentagon $N_5$, paired up with their respective dual cotransfer system ${}^\boxslash\T$. 
Note that the dual cotransfer system is in general not simply the complement set of arrows. 
We direct the interested reader  to Section 3 of \cite{MORSVZ2} for more details on how to construct a cotransfer system from a transfer system.

\begin{figure}[t]\label{fig:tsandcs}
    \centering
    \scalebox{.5}{\input{tableco.tex}}
    \caption{The 26 transfer systems on $N_5$ with their dual cotransfer systems.}
    \label{fig:transfersandcotransfers}
\end{figure}

\end{example}

We will now introduce the concept of a transfer system generated by a set.
Let $S$ be a set of morphisms in $\C$. Then we define the smallest transfer system containing $S$ to be denoted as $\left<S\right>_{TS}$.
If $\C$ is a finite lattice, $\left<S\right>_{TS}$ always exists as it is the intersection of all transfer systems containing $S$. This intersection both finite and nonempty, as $\T=\C$ is always a transfer system containing $S$. One can make $\left<S\right>_{TS}$ even more explicit, see \cite[Theorem A.2]{Rub21} or \cite[Lemma 3.6]{MORSVZ}, namely as
\[
\left< S \right>_{TS}=\{s_n \circ s_{n-1} \cdots \circ s_1 \,\,|\,\, n \ge 0, \,\,\mbox{$s_i$is a pullback of an element of $S$} \}.
\]
In other words, we first close $S$ under pullbacks and then we close the result of this under composition.

If $\C$ is a lattice, then the poset of transfer systems $\mathrm{Tr}(\C)$ on $\C$ forms a lattice itself \cite[Theorem 2.20]{MORSVZ2}. The meet operation $T_1 \vee T_2$ is given by the intersection $T_1 \cap T_2$, and the join of two transfer systems is given by the smallest transfer system containing both $T_1$ and $T_2$, i.e.
\[
T_1 \wedge T_2 = T_1 \cap T_2 \,\,\,\mbox{and}\,\,\, T_1 \vee T_2 = \left<T_1 \cup T_2\right>_{TS}.
\]

\begin{example}
In the example of a total order $[n]$, the lattice of transfer systems on $[n]$ is given by the $(n+1)$-Tamari lattice \cite[Theorem 25]{BBR}. 
\end{example}

\begin{example}
We now know that the transfer systems in Figure \ref{fig:TSN5} form a lattice and not only a poset. 
The transfer systems with a red asterisk are precisely those transfer systems generated by a single edge. These are 

$$\sigma_1 = \includegraphics[scale=0.35]{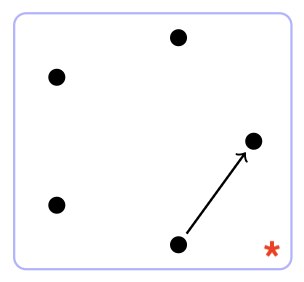}, \quad
\sigma_2 = \includegraphics[scale=0.35]{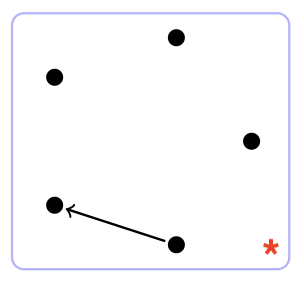} \quad \sigma_3 = \includegraphics[scale=0.35]{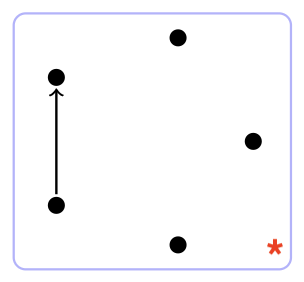} \quad
\sigma_4= \includegraphics[scale=0.35]{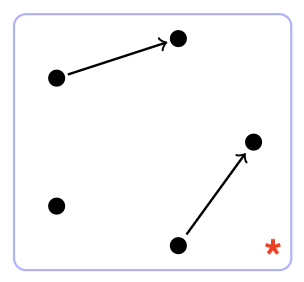},
$$
$$\sigma_5 = \includegraphics[scale=0.35]{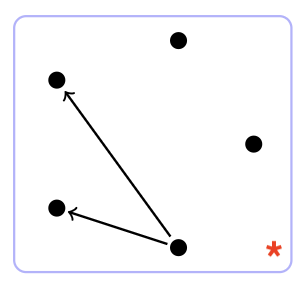}, 
\quad 
\sigma_6 = \includegraphics[scale=0.35]{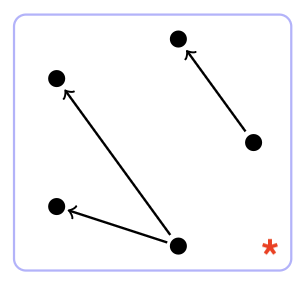},\quad
\sigma_7 = \includegraphics[scale=0.35]{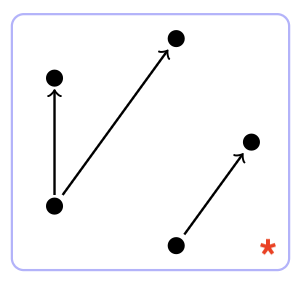},  \quad \sigma_8 = \includegraphics[scale=0.35]{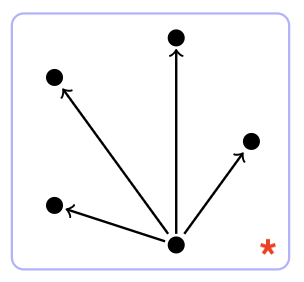}$$

We can then rewrite all the transfer systems on $N_5$ using this set of generators, and we arrive at Figure \ref{fig:gentransfer lattice}.
\begin{figure}[t]
    \centering
    \scalebox{.7}{\input{gentransferlattice}}
    \caption{The lattice $\mathrm{Tr}(N_5)$ using generators.}
    \label{fig:gentransfer lattice}
\end{figure}
\end{example}

\section{Transfer systems and model structures}\label{sec:models}

We assume that the reader is familiar with the basic definitions regarding model categories, but we recall some notions for convenience.

\begin{definition}
A \emph{model structure} on a category $\C$ consists of three subcategories $\W$, $\Cof$ and $\F$ called \emph{weak equivalences}, \emph{cofibrations} and \emph{fibrations}, respectively, satisfying the axioms below. 
The subcategories $\AC = \Cof \cap \W$ and $\AF = \F \cap \W$ are called \emph{acyclic cofbrations} and \emph{acyclic fibrations.}
\begin{enumerate}
\item $\C$ has all finite limit s and colimits.
\item (2-out-of-3) If two out of the three morphisms $f$, $g$ and $g\circ f$ are in $\W$, then so is the third.
\item (Retracts) $\W$, $\Cof$, and $\F$ are closed under retracts. 
\item (Lifts) $\AC={}^\boxslash \F$ and $\Cof = {}^\boxslash \AF$
\item (Factorization) $\W=\AF \circ \Cof = \F \circ \AC$
\end{enumerate}
\end{definition}
We will denote fibrations with a double-tipped arrow.

We make the following observations.
\begin{itemize}
\item In the case of the category $\C$ being a finite lattice, the first two axioms become trivial. 
\item The data a model structure is completely determined by the classes $\W$ and $\AF$ only. The same holds for other choices of pairs of classes, for example $\C$ and $\F$ only, but we choose the presentation of $\W$ and $\AF$ as it is the most suitable for our methods. 
\item The acyclic fibrations $\AF$ are always closed under retracts and contain all identity morphisms \cite[Proposition 3.14]{DS}, which means that they form a transfer system. This is also true for $\F$.

\end{itemize}

Furthermore, when $\C$ is a lattice, the weak equivalences of a model structure satisfy something stronger than the 2-out-of-3 axiom, see \cite[Proposition 1.8]{DZ}.

\begin{lemma}
The weak equivalences of a model structure on a lattice $\C$ are decomposable, i.e. if $g \circ f$ is in $\W$, then so are $g$ and $f$.
\end{lemma}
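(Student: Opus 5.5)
Suppose $f\colon X\to Y$ and $g\colon Y\to Z$ are composable in the lattice $\C$ with $g\circ f\colon X\to Z$ in $\W$. We must show $f,g\in\W$. The plan is to factor $g\circ f$ using the factorization axiom and then use that a lattice is a poset, so every diagram commutes and a lift exists exactly when the required inequality holds.

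\textbf{Step 1: factor and produce $f$.} Write $g\circ f = p\circ i$ with $i\colon X\to W$ in $\AC$ and $p\colon W\to Z$ in $\F$, using $\W=\F\circ\AC$. Consider the square
\[
\xymatrix{X \ar[r]^{i} \ar[d]_{f} & W \ar[d]^{p}\\
Y \ar[r]_{g} & Z }
\]
It commutes automatically. Since $i\in\AC={}^\boxslash\F$ and $p\in\F$, there is a lift $Y\to W$, so $Y\le W$. Hence $i$ factors as $X\le Y\le W$, i.e.\ $i$ is $f$ followed by the arrow $Y\to W$. In the same way, from $g\circ f=q\circ j$ with $j\in\Cof$ and $q\colon V\to Z$ in $\AF$, the square with $j$ on the left and $q$ on the right gives a lift $V\to Y$ when read with $f$ on top. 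Hence $q$ factors through $g$.

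\textbf{Step 2: transfer membership in $\W$ to the factors.} We now need that if $i=h\circ f$ lies in $\W$, then so does $f$. A cleaner route is to use retracts. In a poset, $f\colon X\to Y$ is a retract of $i\colon X\to W$ as soon as there are arrows $Y\to W\to Y$ in the codomain, compatible with the identity on $X$. That would require $W\le Y$, which generally fails.

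Instead I use the fibration–cofibration structure directly. Factor $f=p'\circ i'$ with $i'\colon X\to U$ in $\AC$ and $p'\colon U\to Y$ in $\F$. By 2-out-of-3 applied to $i=h\circ f$, it suffices to show $p'\in\W$, equivalently that $p'$ is an isomorphism up to acyclic fibrations. The decisive observation is that in a poset all arrows with the same source and target coincide, so lifting problems reduce to order inequalities. I would show $p'\in\AF$ by verifying $p'\in\F$ and then lifting $p'$ against the cofibrations. The square $X\to U$, $X\to W$ with $i\in\AC$ gives a lift that places $W$ and $U$ appropriately. Combining this with $Y\le W$ yields $Y\le U$, so $U=Y$ and $p'$ is the identity. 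Then $f=i'\in\AC\subseteq\W$.

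\textbf{Step 3: the other factor by duality.} The dual argument (factor $g$ as $\AF\circ\Cof$ and use the $\Cof$–$\AF$ lift) gives $g\in\W$. The main obstacle is Step 2, namely getting the order inequalities to close up, in particular producing $Y\le U$ from the available lifts. If that fails, I would fall back on citing \cite[Proposition 1.8]{DZ}, whose argument is exactly this poset-lifting computation.
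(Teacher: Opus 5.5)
Your direct argument fails at Step 1, and the later steps depend on it. The axiom $\AC={}^\boxslash\F$ only yields a lift in a square whose \emph{left} edge is the acyclic cofibration and whose right edge is the fibration. In your square $i$ is the top edge and $f$ is the left edge, so a lift $Y\to W$ would need $f$ to lift against $p$, which is exactly what is unknown. The resulting inequality $Y\le W$ is false in general. On $[2]$, take $\W=\AF$ to be all morphisms, so $\Cof=\AC$ consists of identities and $\F$ of all morphisms. Factoring $0\to 2$ as $\F\circ\AC$ forces $i=\mathrm{id}_0$ and $W=0$, while $Y=1$. The dual claim ($q$ factors through $g$) fails in the structure with $\W=\AC$ all morphisms and $\AF$ the identities, where $V=2\not\le 1=Y$.

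Step 2 then concludes that $p'$ is an identity, i.e.\ that $f\in\AC$. This is false in the first example: $0\to 1$ is a weak equivalence but not an acyclic cofibration. So the argument proves too much, and Step 3 has the same defect. Falling back on \cite[Proposition 1.8]{DZ} is exactly what the paper does---it gives no argument of its own, only that citation---but your sketch does not work as a proof.

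The missing idea is to use base and cobase change rather than lifts in squares you cannot control. $\AC={}^\boxslash\F$ is closed under pushouts, $\AF$ is closed under pullbacks (it is a transfer system), and in a lattice these are computed by joins and meets. Factor $g\circ f$ as $X\xrightarrow{i}W\xrightarrow{p}Z$ with $i\in\AC$ and $p\in\F$; by two-out-of-three, $p\in\AF$. Put $M=W\wedge Y$, so $X\le M$. Then $M\to Y$ is the pullback of $p$ along $g$, hence lies in $\AF$. Also $M\to W$ is the pushout of $i$ along $X\to M$ (since $M\vee W=W$), hence lies in $\AC$. Two-out-of-three applied to $X\to M\to W$ gives $X\to M\in\W$. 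So $f=(M\to Y)\circ(X\to M)\in\W$, and then $g\in\W$ by two-out-of-three once more.
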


Therefore, on a finite lattice the data of a model structure gives us a wide decomposable subcategory $\W$ and a transfer system $\T=\AF \subseteq \W$. However, not every wide decomposable subcategory is the weak equivalence set of a model structure. 

\begin{example}
On the lattice $[2]\times[1]$, consider the wide decomposable subcategory with only nontrivial arrow $(1,0) \rightarrow (1,1)$. There is no model structure with this as its weak equivalence set: the factorization axiom tells us that the map $(1,0) \rightarrow (1,1)$ has to be either an acyclic fibration or an acyclic cofibration. Therefore, in particular, either its pullback or its pushout must be a weak equivalence, which is not the case.
\end{example}

In this spirit, we say that a wide decomposable subcategory $\W$ is a \emph{weak equivalence set} if there is a model structure with $\W$ as its weak equivalences.
There is in fact a necessary and sufficient condition for a wide decomposable subcategory to be a weak equivalence set, see \cite[Theorem 5.8]{MORSVZ2}.

\begin{theorem}\label{thm:legalW}
A wide decomposable subcategory $\W$ of a finite lattice $\C$ is a weak equivalence set if and only if following holds:

For every $f \in \W$ there is a factorization into short arrows
$f=\sigma_n \circ \sigma_{n-1} \circ \cdots \circ \sigma_2 \circ \sigma_1$
such that there is a $k$ with
\begin{itemize}
\item for $i \leq k$, all pushouts of $\sigma_i$ are in $\W$,
\item for $i >k$, all pullouts of $\sigma_i$ are in $\W$.
\end{itemize}
\end{theorem}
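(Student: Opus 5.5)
For the direction ``weak equivalence set $\Rightarrow$ condition'', I would fix a model structure $(\W,\Cof,\F)$ on the finite lattice $\C$ and an arrow $f\colon x\to y$ in $\W$. By the factorization axiom, $f = p\circ i$ with $i\colon x\to z$ in $\AC$ and $p\colon z\to y$ in $\AF$. Factor $i$ and $p$ into short arrows, which is possible because $\C$ is finite. Every short factor of $i$ lies in $\AC$ and every short factor of $p$ lies in $\AF$. To see this, use that $\AF$ is a transfer system and $\AC$ is a cotransfer system. If $g\circ h\in\AF$, then $h$ is the pullback of $g\circ h$ along $g$, and in a lattice it is even a composite of pullbacks. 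The same holds dually for $\AC$. Moreover, by the Lemma above $\W$ is decomposable, so all the factors are weak equivalences. More directly: a factor $h$ of $g\circ h\in\AF$ equals the pullback of $g\circ h$ along $g$, hence lies in $\AF$. For the outer factor $g$ we use the retract argument in posets: $g$ is a retract-type factor and lies in $\F$. Concretely, I would argue with the lifting characterizations $\AF=\Cof^\boxslash$ and $\AC={}^\boxslash\F$. In a poset, a lifting problem against a factor reduces to one against the composite.

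Set $k$ to be the number of short factors of $i$. For $i'\le k$, $\sigma_{i'}\in\AC$, and $\AC$ is closed under pushouts, so all pushouts of $\sigma_{i'}$ lie in $\AC\subseteq\W$. Dually, for $i'>k$, $\sigma_{i'}\in\AF$, which is closed under pullbacks, so all pullbacks lie in $\W$. This direction is routine once the factors are known to lie in $\AC$ and $\AF$.

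For the converse, I would construct the model structure by choosing $\AF$ as the transfer system generated by the short arrows of $\W$ all of whose pullbacks lie in $\W$, that is,
\[
\AF=\left<\{\sigma \text{ short}\,:\, \text{all pullbacks of } \sigma \text{ lie in } \W\}\right>_{TS}.
\]
I would then check the following.
\begin{enumerate}
\item $\AF\subseteq\W$. This holds because pullbacks of generators lie in $\W$ and $\W$ is closed under composition.
\item $\Cof={}^\boxslash\AF$ is a cotransfer system, by the Proposition above.
\item $\F$ is defined via $\W$ and the lifting properties.
\item Each $f\in\W$ factors as an $\AC$ followed by an $\AF$, using the given factorization at index $k$.
\item Every short arrow all of whose pushouts lie in $\W$ lies in ${}^\boxslash\AF\cap\W$.
\end{enumerate}

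The main obstacle is verifying the lifting and factorization axioms simultaneously, in particular $\W=\AF\circ\Cof$ for \emph{all} arrows, not just those in $\W$. Here I would appeal to the structure result of \cite{MORSVZ2}, that a pair $\W\supseteq\T$ defines a model structure exactly under compatibility conditions, and reduce to checking those conditions for this $\T$. The condition on the $\sigma_i$ with $i\le k$ is precisely what makes the left portion left-liftable against $\AF$: a lifting square of such a $\sigma_i$ against an element of $\AF$ forces a pushout of $\sigma_i$ into $\W$, and from there into the required lift.
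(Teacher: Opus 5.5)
The paper gives no proof of this statement; it quotes it from \cite[Theorem 5.8]{MORSVZ2}. Taken on its own terms, your proposal has genuine gaps in both directions. \textbf{Necessity.} Your claim that every short factor of $p\in\AF$ lies in $\AF$ (and dually for $i\in\AC$) is false. Pullback-closure only gives the \emph{inner} factor; the outer factor need not lie in $\AF$, or even in $\F$. So your ``retract argument'' and the claim that ``a lifting problem against a factor reduces to one against the composite'' both fail. Here is a counterexample on $N_5$. Take $\W$ to be all arrows and $\AF=\langle 0\to C\rangle_{TS}=\{0\to A,\,0\to C\}$ (plus identities). When $\W$ is everything, every transfer system occurs as $\AF$, and $\F=\F\cap\W=\AF$. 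Yet the short factor $A\to C$ of $0\to C\in\AF$ is not in $\F$. The conclusion you need is still true, but it comes from decomposability of $\W$. Suppose $p\colon z\to y$ factors through $z=z_0\le z_1\le\cdots\le z_m=y$, and let $w\le z_j$. The pullback $z_{j-1}\wedge w\to w$ of $z_{j-1}\to z_j$ is the outer factor of $z\wedge w\to z_{j-1}\wedge w\to w$. That composite is the pullback of $p$ along $w\to y$, so it lies in $\AF\subseteq\W$, and decomposability gives the claim. The pushouts of the short factors of $i$ are handled dually.

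\textbf{Sufficiency.} Your choice of $\AF$ is a good one: it equals $\T_{max}$, the largest transfer system in $\W$. However, item (5) is false. For $\W$ equal to all arrows, $\AF$ is everything and ${}^\boxslash\AF$ contains only identities, even though every short arrow has all its pushouts in $\W$. So neither (4) nor your closing lifting argument works. The remaining step is deferred to an unspecified ``structure result'' of \cite{MORSVZ2}, which is the very source of this theorem, so nothing is actually proved.

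Here is what is missing. Put $\Cof={}^\boxslash\T_{max}$, $\AC=\Cof\cap\W$ and $\F=\AC^\boxslash$. Then $\W=\AF\circ\AC$ is automatic: factor $f\in\W$ through the weak factorization system $(\Cof,\T_{max})$, and the first factor lies in $\W$ by decomposability. The real content is that $\AC$ is closed under pushouts, which makes $(\AC,\F)$ a weak factorization system.

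This is where the hypothesis enters. For $c\in\AC$, write $c=p\circ i$ with $i=\sigma_k\circ\cdots\circ\sigma_1$ and $p=\sigma_n\circ\cdots\circ\sigma_{k+1}$. Then $i$ lies in the cotransfer system generated by $\sigma_1,\dots,\sigma_k$, which is contained in $\W$, and $p\in\T_{max}$. Now lift $c$ against $p$ in the square with top $i$ and bottom the identity. In a poset this forces $p=\mathrm{id}$, so $c=i$ and all pushouts of $c$ lie in $\W$ (and in $\Cof$). Finally, 2-out-of-3 follows from decomposability, and the usual retract argument gives $\F\cap\W=\AF$.
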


Let us apply this result to the pentagon $N_5$. 

\begin{proposition}
On the pentagon lattice $N_5$, every wide decomposable subcategory is a weak equivalence set.
\end{proposition}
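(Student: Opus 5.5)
We apply Theorem~\ref{thm:legalW} directly. Let $\W$ be a wide decomposable subcategory of $N_5$ and fix $f\in\W$. Since $\W$ is decomposable, every short arrow in any factorization of $f$ into short arrows lies in $\W$. We need an ordering of these short factors, and a cut point $k$, so that the first $k$ factors have all their pushouts in $\W$ and the remaining ones have all their pullbacks in $\W$. The starting point is Table~\ref{tab:pushoutspullbacks}. For each of the five short arrows we read off which pushouts and pullbacks are nontrivial:
\begin{itemize}
\item $0\to A$ has no nontrivial pullback;
\item $A\to C$ has neither a nontrivial pushout nor a nontrivial pullback;
\item $C\to 1$ has no nontrivial pushout;
\item $0\to B$ has no nontrivial pullback;
\item $B\to 1$ has no nontrivial pushout.
\end{itemize}
So every short arrow is ``free'' on at least one side. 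The condition on its pullbacks (or pushouts) holds vacuously, apart from identities and the arrow itself, which lie in $\W$.

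Next we run through the nonidentity arrows $f$ by type.
\begin{itemize}
\item If $f$ is short, take $n=1$. Put $k=0$ if $f$ has no nontrivial pullbacks ($0\to A$, $A\to C$, $0\to B$). Put $k=1$ if $f$ has no nontrivial pushouts ($C\to 1$, $B\to 1$).
\item For $f=0\to C=(A\to C)\circ(0\to A)$, take $k=2$. The only nontrivial pushout of $0\to A$ is $B\to 1$, and $A\to C$ has none. So we need $B\to 1\in\W$, which does not follow from decomposability. Instead take $k=0$: both factors have no nontrivial pullbacks, so the condition holds vacuously.
\item For $f=A\to 1=(C\to 1)\circ(A\to C)$, take $k=2$. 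Neither factor has a nontrivial pushout, so the condition is again vacuous.
\item For $f=0\to 1$ there are two factorizations, $(B\to 1)\circ(0\to B)$ and $(C\to 1)\circ(A\to C)\circ(0\to A)$. For the first, take $k=1$. The first factor $0\to B$ then needs all its pushouts in $\W$, namely $A\to 1$ and $C\to 1$, which are not guaranteed. The second factor $B\to 1$ needs its pullbacks $0\to A$ and $0\to C$, also not guaranteed.
\end{itemize}

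To handle $0\to 1$, use the longer chain $0\to A\to C\to 1$ with $k=1$. The first factor $0\to A$ needs its pushout $B\to 1$ to be in $\W$. The factor $A\to C$ is vacuous. The last factor $C\to 1$ needs its pullback $0\to B$ to be in $\W$. Both $0\to B$ and $B\to 1$ are factors of $0\to 1$ through $B$, so decomposability of $\W$ puts them in $\W$. This completes the verification.

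The main obstacle is this last case, $f=0\to 1$. Pushouts and pullbacks of short arrows there are not short, and a careless choice of factorization through $B$ fails. The key observation is that routing the chain through $A$ and $C$ moves the requirements onto the arrows $B\to 1$ and $0\to B$, and these are guaranteed by factoring $f$ through $B$. Everything else is a routine table lookup.
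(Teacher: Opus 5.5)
Your proof is correct and takes essentially the same approach as the paper: a case-by-case check of the criterion in Theorem~\ref{thm:legalW} using Table~\ref{tab:pushoutspullbacks}, with the same choices of $k$ for arrows of length at most two. The one difference is $0\to 1$, where the paper simply observes that decomposability forces $\W$ to contain every arrow (each $x\to y$ is a factor of $0\to x\to y\to 1$). So your claim that the factorization through $B$ ``fails'' is mistaken: its required pushouts and pullbacks are also in $\W$, and there is no real obstacle in this case.
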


\begin{proof}
The claim is proven using the condition from Theorem \ref{thm:legalW} by breaking the work up into the possible lengths of a morphism $f \in \W$. (By ``length'' we mean the number of short arrows a morphism can be factored into.) 
Table \ref{tab:pushoutspullbacks} of pushouts and pullbacks in $N_5$ tells us that every morphism in $N_5$ of length 1 has either no nontrivial pushouts or no nontrivial pullbacks, so if $f \in \W$ is of length 1 then the condition of Theorem \ref{thm:legalW} is satisfied automatically. 

There are two morphisms in $N_5$ of length 2, namely $A \rightarrow 1$ and $0 \rightarrow C$. We see that if we factor $A \rightarrow 1$ into short arrows $A \rightarrow C \rightarrow 1$, the two short arrows $A \rightarrow C$ and $C \rightarrow 1$ have no nontrivial pushouts, so the condition of Theorem \ref{thm:legalW}. A similar argument holds for $0 \rightarrow C$. 

The only morphism in $N_5$ of length 3 is $0 \rightarrow 1$. If $0 \rightarrow 1 \in \W$, then by decomposability of $\W$, we automatically have $\W$ is the set of all morphisms in $N_5$ and there is nothing to check. 
\end{proof}

\begin{corollary}
There are 22 weak equivalence sets on $N_5$.
\qed
\end{corollary}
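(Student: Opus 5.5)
By the preceding Proposition, every wide decomposable subcategory of $N_5$ is a weak equivalence set. So the count of 22 reduces to counting wide decomposable subcategories $\W$ of $N_5$, which is purely combinatorial. I would parametrise such a $\W$ by the set $S$ of short arrows it contains, where $S \subseteq \{0\to A,\ A\to C,\ C\to 1,\ 0\to B,\ B\to 1\}$. I would then show that $S$ determines $\W$, and determine exactly which $S$ occur.

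\textbf{Step 1: $S$ determines $\W$.} Since $\W$ is decomposable, any non-short arrow in $\W$ has all of its short factors in $\W$. Since $\W$ is a subcategory, every composite of arrows of $S$ lies in $\W$. Hence $\W$ is exactly the set of identities together with all composites of composable chains in $S$:
\begin{itemize}
\item $A\to 1 \in \W$ if and only if $A\to C$ and $C\to 1$ are in $S$;
\item $0\to C \in \W$ if and only if $0\to A$ and $A\to C$ are in $S$;
\item $0\to 1 \in \W$ if and only if either chain $0\to A\to C\to 1$ or $0\to B\to 1$ lies in $S$.
\end{itemize}

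\textbf{Step 2: which $S$ are admissible.} The only subtlety is the arrow $0\to 1$, which has two different short factorizations. If $0\to 1\in\W$, then decomposability forces both factorizations into $\W$, so all five short arrows lie in $S$. Consequently $S$ is admissible if and only if it satisfies:
\begin{itemize}
\item $\{0\to B, B\to 1\}\subseteq S$ implies that $S$ contains all five short arrows;
\item $\{0\to A, A\to C, C\to 1\}\subseteq S$ implies that $S$ contains all five short arrows.
\end{itemize}
Conversely, for any admissible $S$, the $\W$ built in Step 1 is a subcategory by construction. It is decomposable because every factorization of a composite of arrows of $S$ again uses only arrows of $S$, with $0\to 1$ covered by admissibility. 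This check is routine.

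\textbf{Step 3: count.} There are $2^5=32$ subsets of the short arrows, and we exclude the inadmissible ones.
\begin{itemize}
\item Subsets containing $\{0\to B,B\to 1\}$ but not all of $\{0\to A,A\to C,C\to 1\}$: there are $2^3-1=7$.
\item Subsets containing $\{0\to A,A\to C,C\to 1\}$ but not both of $0\to B,B\to 1$: there are $2^2-1=3$.
\end{itemize}
These two families are disjoint, since a subset in both would contain all five short arrows. This gives
\[
32-7-3=22
\]
wide decomposable subcategories, and hence 22 weak equivalence sets.

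The only real obstacle is Step 2. One must correctly handle the nonmodular feature that $0\to 1$ has factorizations of different lengths, so that either factorization lying in $\W$ forces the other one in as well.
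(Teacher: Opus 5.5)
Your proposal is correct and takes the paper's route: the preceding Proposition identifies weak equivalence sets with wide decomposable subcategories, and the corollary is then a count of the latter. Your parametrization by short arrows (where completing either chain $0\to A\to C\to 1$ or $0\to B\to 1$ forces $\W=\mbox{ALL}$) gives $(2^3-1)(2^2-1)+1=22$, which spells out the enumeration the paper leaves implicit behind its \verb|\qed|.
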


Fixing a weak equivalence set $\W$, not every transfer system $\T \subseteq \W$ arises as the acyclic fibrations of a model category. 

\begin{example}
Consider the lattice $\C=[1]\times [1]$ and the wide decomposable subcategory $\W$ with only nontrivial arrow $(0,0)\rightarrow (0,1)$. Then there is a model structure on $\C$ with $\AF=\W$, but there is no model structure with this $\W$ and the trivial transfer system as its acyclic fibrations. This can be verified directly from the model category axioms, see also \cite[Remark 3.15]{BOOR} and \cite[Example 4.16]{MORSVZ2} for details.
\end{example}

The above example illustrates that the model category axioms always force a certain minimal set of acyclic fibrations to be present. A complete characterization is given by the following result from \cite[Theorem 4.20]{MORSVZ2}.

\begin{theorem}\label{thm:tmin}
For every weak equivalence set $\W$ there is a transfer system $\T_{min}\subset \W$ such that the following are equivalent:
\begin{itemize}
\item There is a model structure with weak equivalences $\W$ and $\AF=\T$.
\item $\T_{min} \subseteq \T \subseteq \W$.
\end{itemize}
\end{theorem}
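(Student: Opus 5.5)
We must prove Theorem \ref{thm:tmin}: for every weak equivalence set $\W$ there is a transfer system $\T_{min}\subseteq\W$ such that a transfer system $\T$ is the class $\AF$ of a model structure with weak equivalences $\W$ if and only if $\T_{min}\subseteq\T\subseteq\W$. The plan is to define $\T_{min}$ explicitly as the transfer system forced by the factorization axiom, and then show that every $\T$ between $\T_{min}$ and $\W$ satisfies the model axioms.

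\textbf{Definition of $\T_{min}$.} Call a short arrow $\sigma\in\W$ \emph{forced} if some pushout of $\sigma$ lies outside $\W$. Set
\[
\T_{min}=\left<\{\sigma \in \W \text{ short} \,:\, \text{some pushout of } \sigma \text{ is not in } \W\}\right>_{TS}.
\]
We first check $\T_{min}\subseteq\W$. Here we use Theorem \ref{thm:legalW} for a forced $\sigma$: since $\sigma$ is short, the factorization in that theorem is $\sigma$ itself, and since the pushout condition fails, all pullbacks of $\sigma$ lie in $\W$. Composites of arrows in $\W$ stay in $\W$, so the generated transfer system lies in $\W$.

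\textbf{Necessity.} Suppose $(\W,\AF)$ is a model structure, and let $\sigma$ be a short arrow in $\W$ that is forced. Factor $\sigma=p\circ i$ with $i\in\Cof$ and $p\in\AF$. Because $\sigma$ is short, either $p=\sigma$ or $i=\sigma$. In the second case $\sigma\in\Cof\cap\W=\AC={}^\boxslash\F$. Acyclic cofibrations are closed under pushouts: they form the cotransfer system dual to the transfer system $\F$, by the Proposition on (co)transfer duality. Hence all pushouts of $\sigma$ would lie in $\W$, contradicting that $\sigma$ is forced. So $\sigma\in\AF$, and since $\AF$ is a transfer system, $\T_{min}\subseteq\AF\subseteq\W$.

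\textbf{Sufficiency.} Given $\T_{min}\subseteq\T\subseteq\W$, define $\Cof={}^\boxslash\T$ and $\AC=\Cof\cap\W$, and take $\F$ to be $\AC^\boxslash$. We must verify the factorization $\W=\T\circ\Cof$ and that $\F\cap\W=\T$. For $f\in\W$, we use the factorization of Theorem \ref{thm:legalW}, arranged so that the arrows with all pushouts in $\W$ come first. The plan is to show the following two claims.
\begin{enumerate}
\item A short arrow of $\W$ all of whose pushouts lie in $\W$ and which is not in $\T$ can be absorbed into the cofibration part. This uses the characterization of ${}^\boxslash\T$ via pushouts/restrictions from \cite{MORSVZ2}.
\item Arrows not in $\T$ with a pushout outside $\W$ cannot occur, since such arrows lie in $\T_{min}\subseteq\T$.
\end{enumerate}
We then reorder to get $f=p\circ i$. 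In a lattice, the lifting conditions reduce to comparisons of joins and meets, so this step amounts to checking pushouts and pullbacks.

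\textbf{Main obstacle.} The hardest step is proving $\F\cap\W=\T$ together with the second factorization $\W=\F\circ\AC$, since it couples the two lifting classes. I would attack it via the retract argument: any $f\in\F\cap\W$ factors as $p\circ i$ with $i\in\Cof$ and $p\in\T$, and $f$ lifts against $i$ only when $f$ is a retract of $p$. In a poset a retract is equality, so $f=p\in\T$.
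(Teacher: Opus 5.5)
Your necessity argument is correct as far as it goes, but the $\T_{min}$ you define is in general strictly smaller than the right one, so the sufficiency direction is false. Forcing is not confined to short arrows. A composite $f\in\W$ must factor as $p\circ i$ with $p\in\AF$ and $i\in\AC$, and $i$ must have all its pushouts in $\W$. This can force $f$ itself into $\AF$ even when every forced short factor of $f$ already lies in $\T$.

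Concretely, on $N_5$ take $\W=\{0\to A,\ A\to C,\ 0\to C\}$ (plus identities). The arrow $0\to A$ is forced, since its pushout along $0\to B$ is $B\to 1\notin\W$. The arrow $A\to C$ has no nontrivial pushouts. So your recipe gives $\T_{min}=\{0\to A\}$. Now take $\T=\{0\to A\}$. The arrow $0\to C$ can only factor through $0$, $A$ or $C$. The first two choices need $0\to C$ or $A\to C$ in $\AF$. The third needs $0\to C\in\AC$, which is impossible because its pushout along $0\to B$ is again $B\to 1\notin\W$. So no model structure has weak equivalences $\W$ and $\AF=\{0\to A\}$, although $\{0\to A\}$ lies between your $\T_{min}$ and $\W$. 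The true minimum is $\{0\to A,\ 0\to C\}$.

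In your sufficiency sketch, the step that fails is one you never address. Having set $\AC={}^\boxslash\T\cap\W$ and $\F=\AC^\boxslash$, you need $\AC={}^\boxslash\F$, i.e.\ $\AC$ must be closed under pushouts. A short arrow lies in ${}^\boxslash\T$ exactly when it is not in $\T$. Cofibrations, however, are not decomposable. So a composite cofibration in $\W$, such as $0\to C$ above, can have a forced factor lying in $\T$ and hence a pushout outside $\W$. Your claim (2) only covers short arrows, and no reordering in claim (1) repairs this. (The factorization $\W\subseteq\T\circ{}^\boxslash\T$ is automatic for any transfer system: factor $a\to b$ through the smallest $c\geq a$ with $c\to b\in\T$. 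Theorem~\ref{thm:legalW} is not needed there.)

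The fix is the description the paper uses. Let $\K_{max}$ be the set of arrows of $\W$ all of whose pushouts lie in $\W$; this is the largest cotransfer system in $\W$. Put $\T_{min}=\K_{max}^\boxslash\cap\W$.
\begin{itemize}
\item Necessity: $\AC$ is a cotransfer system inside $\W$, so $\AC\subseteq\K_{max}$ and $\AF=\AC^\boxslash\cap\W\supseteq\T_{min}$.
\item Sufficiency: given $i\in{}^\boxslash\T\cap\W$, factor $i=q\circ k$ with $k\in\K_{max}$ and $q\in\K_{max}^\boxslash$. Decomposability of $\W$ puts $q$ in $\T_{min}\subseteq\T$, so $i$ lifts against $q$, and your own retract argument gives $i=k\in\K_{max}$. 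Hence $\AC\subseteq\K_{max}$ is pushout-closed.
\end{itemize}
The rest of your outline then goes through: $\W=\T\circ\AC$ by decomposability, and $\F\cap\W=\T$ by the retract argument. Finally, that $\T_{min}$ is a transfer system uses that $\W$ is a weak equivalence set. $\T_{min}$ sits inside the $\AF$ of some model structure with weak equivalences $\W$, so its pullbacks stay in $\W$.
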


Fix a weak equivalence set $\W$.
We denote by $\AF(\W) \subseteq \mathrm{Tr}(\mathcal{C})$ the set of all of transfer systems $\T$ form a model structure on $\C$ with weak equivalences $\W$ and acyclic fibrations $\T$.
Theorem 4.20 of  \cite{MORSVZ2} shows that $\AF(\W)$ is a sublattice of $\mathrm{Tr}(\C)$ and moreover is an interval. Recall that an interval in a lattice $\mathcal{P}$ with minimal element $A$ and maximal element $B$ is the set \[
[A,B] = \{ X \in \mathcal{P} \,\,|\,\, A \leq X \leq B \}.
\]
(Despite its name, note that an interval in a lattice is not necessarily isomorphic to $[n]$.)
More precisely, one has
\[
\AF(\W)=[\T_{min},\T_{max}]=\{ \T \in \mathrm{Tr}(\C)\,\,|\,\, \T_{min} \subseteq \T \subseteq \T_{max}\},
\]
where $\T_{max}$ is the (unique) largest transfer system contained in $\W$, and $\T_{min}$ is the transfer system from Theorem \ref{thm:tmin}.

With the previous theorem in mind, for each $\W$ on $N_5$, we can work out what $\T_{min}$ is using the formula
\[
\T_{min}= (\K_{max}^\boxslash) \cap \W,
\]
where $\K_{max}$ is the largest cotransfer system contained in $\W$. This is because for any weak equivalence set $\W$, there is always a largest possible set of corresponding acyclic cofibrations, and this turns out to be $\K_{max}$ \cite[Corollary 4.26]{MORSVZ2}. By duality, $\T_{min}$ is going to be the acyclic fibration set corresponding to $\K_{max}$, which results in the above formula.

We apply this method to the 22 weak equivalence sets of $N_5$
and display the results in the tables Figure \ref{tableAFW3} and \ref{tableAFW} and \ref{tableAFW2}.
The first column shows the weak equivalence sets $\W$ as shaded (red) regions, and the second column is the corresponding lattice $\AF(\W)$.
Enumerating these calculations, we see that there are 70 model structures on $N_5$.

\begin{minipage}[c]{0.45\linewidth}
\begin{table}[H]
    \centering
    \scalebox{.5}{\input{AF_W_table3}}
    \caption{Model Structures on $N_5$, part 1}
    \label{tableAFW3}
\end{table}
\end{minipage}\hfill
\begin{minipage}[c]{0.45\linewidth}
\begin{table}[H]
    \centering
    \scalebox{.5}{\input{AF_W_table}}
    \caption{Model Structures on $N_5$, part 2}
    \label{tableAFW}
\end{table}   
\end{minipage}\hfill
\begin{table}[H]
    \centering
    \scalebox{.5}{\input{AF_W_table2}}
    \caption{Model Structures on $N_5$, part 3}
    \label{tableAFW2}
\end{table}

Looking at the last two weak equivalences sets before $\W=\mbox{ALL}$, we can make the following observation. 

\begin{proposition}\label{prop:afnonmodular} There are nontrivial weak equivalence sets $\W \subseteq N_5$ for which $\AF(\W)$ is nonmodular. Consequently, lattice of transfer systems $\mathrm{Tr}(N_5)$ on $N_5$ is nonmodular.
\end{proposition}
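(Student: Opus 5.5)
The plan is to take one of the two largest proper weak equivalence sets from the tables, exhibit a copy of $N_5$ as a sublattice of its lattice $\AF(\W)$, and then transport that copy into $\mathrm{Tr}(N_5)$. I would start by identifying the maximal proper wide decomposable subcategories of $N_5$. If $\W$ contains any arrow into $1$ from an object $X$ and also $0 \to X$, it contains $0\to 1$ by composition, and then $\W$ is everything by decomposability. This leaves two natural candidates:
\[
\W_1=\{0\to A,\ 0\to B,\ 0\to C,\ A\to C\}, \qquad \W_2=\{A\to C,\ A\to 1,\ C\to 1,\ B\to 1\}
\]
(plus identities). These should be the last two weak equivalence sets before $\W=\mbox{ALL}$ in Table \ref{tableAFW2}.

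For $\W_2$, Table \ref{tab:pushoutspullbacks} shows that the pullbacks of $B\to 1$, $C\to 1$ and $A\to 1$ leave $\W_2$. So $\T_{max}=\{A\to C\}$, and $\AF(\W_2)$ is tiny. I will therefore work with $\W=\W_1$.

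First I compute $\T_{max}$ for $\W_1$. Every arrow of $\W_1$ has all of its pullbacks inside $\W_1$ (they are $0\to A$ or trivial), so $\T_{max}=\W_1$. Next I compute $\T_{min}=\K_{max}^\boxslash\cap \W_1$. The arrow $0\to B$ has pushouts $A\to 1$ and $C\to 1$, and $0\to A$ has pushout $B\to 1$, both outside $\W_1$. So $\K_{max}$ should be the cotransfer system generated by $A\to C$ alone. I expect the right lifting class of that, intersected with $\W_1$, to be $\{0\to A,\ 0\to B\}$ or even smaller. I will read this off from Table \ref{fig:transfersandcotransfers} rather than recompute it.

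The interval $[\T_{min},\W_1]$ then consists of the transfer systems lying between these two. Closure under pullbacks and composition says: if $0\to C$ is present then $0\to A$ is, and if $0\to A$ and $A\to C$ are present then so is $0\to C$. Restricted to $\{0,A,C\}$ this is exactly the pentagon $\mathrm{Tr}([2])$ of Figure \ref{fig:uretson2}, and $0\to B$ behaves as an independent Boolean factor. Hence $\AF(\W_1)\cong \mathrm{Tr}([2])\times[1]$, or just $\mathrm{Tr}([2])$ if $\T_{min}$ already contains $0\to B$. Either way it contains the five transfer systems of Figure \ref{fig:uretson2}, each with $0\to B$ adjoined or not consistently. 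These form a sublattice isomorphic to $N_5$.

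The step I expect to be the main obstacle is checking that this is a genuine sublattice and not merely a subposet. I must verify the meets and joins of the pentagon's incomparable pairs using $\T_1\wedge\T_2=\T_1\cap\T_2$ and $\T_1\vee\T_2=\left<\T_1\cup\T_2\right>_{TS}$. The key computation is
\[
\{A\to C\}\vee\{0\to A\}=\{0\to A,\ A\to C,\ 0\to C\},
\]
which holds by composition. The meets are plain intersections, which are easy. I also need to confirm that $\T_{min}$ does not contain $0\to A$ or $A\to C$, since otherwise the pentagon is cut off. If $\T_{min}$ turns out too large, I would fall back on the other maximal $\W$ or on the tabulated intervals.

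Finally, $\AF(\W_1)$ is an interval in $\mathrm{Tr}(N_5)$ by \cite[Theorem 4.20]{MORSVZ2}, and joins and meets in an interval agree with those of the ambient lattice. So the copy of $N_5$ is also a sublattice of $\mathrm{Tr}(N_5)$, and $\mathrm{Tr}(N_5)$ is nonmodular.
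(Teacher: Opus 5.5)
There is a genuine gap: the weak equivalence set you commit to does not work, and neither does your fallback. Your overall strategy is the paper's: find a $\W$ whose interval $\AF(\W)$ contains the five transfer systems on a $3$-chain, then use that intervals are sublattices. The problem is the choice of $\W$.

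First, it is not true that only two candidates remain. The $B$-part can be $0\to B$ or $B\to 1$, and independently the chain $0<A<C<1$ can be split into two intervals in three ways. That gives six maximal proper weak equivalence sets. The paper uses $\{A\to C,\,C\to 1,\,A\to 1,\,0\to B\}$ and $\{0\to A,\,A\to C,\,0\to C,\,B\to 1\}$, not yours.

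Second, for your $\W_1=\{0\to A,\,0\to B,\,0\to C,\,A\to C\}$ you correctly find $\T_{max}=\W_1$ and $\K_{max}=\{A\to C\}$, but your guess for $\T_{min}$ is wrong. An arrow $X\to Y$ can fail to lift against $A\to C$ only if $A\le X$. So every arrow out of $0$ lies in $\K_{max}^\boxslash$, and $\T_{min}=\{0\to A,\,0\to B,\,0\to C\}$. Equivalently, no nonidentity arrow of $\K_{max}$ starts at $0$, so the factorization $\W=\AF\circ\AC$ forces every weak equivalence out of $0$ to be an acyclic fibration.

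Hence $\AF(\W_1)=\{\T_{min},\W_1\}$ is a two-element chain, which is exactly the cut-off case you worried about. For $\W_2$ you yourself showed $\T_{max}=\{A\to C\}$, so $\AF(\W_2)$ has at most two elements as well. As written, the proposal does not establish the first assertion of the proposition.

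The fix is to trade $0\to B$ for $B\to 1$: take $\W=\{0\to A,\,A\to C,\,0\to C,\,B\to 1\}$.
\begin{itemize}
\item By Table~\ref{tab:pushoutspullbacks}, all pullbacks and all pushouts of its arrows lie in $\W$. So $\W$ is both a transfer and a cotransfer system, and $\T_{max}=\K_{max}=\W$.
\item No nonidentity arrow of $\W$ lifts against $\W$: $0\to A$ and $0\to C$ fail against $0\to A$, $A\to C$ fails against $0\to C$, and $B\to 1$ fails against $0\to A$. So $\T_{min}=\W^\boxslash\cap\W$ is trivial.
\item Hence $\AF(\W)$ consists of all transfer systems contained in $\W$. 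It contains the five transfer systems supported on $0<A<C$, and from there your join/meet check and interval argument go through unchanged.
\end{itemize}
The paper's other choice works the same way with the chain $A<C<1$. There you adjoin $0\to B$ to all five members, since it is a pullback of $C\to 1$.
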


\begin{proof}
For $\W_1= \{A \rightarrow C,\, C \rightarrow 1,\, A \rightarrow 1,\, 0 \rightarrow B \} $ and $\W_2= \{0 \rightarrow A,\, A \rightarrow C,\, 0 \rightarrow C,\, B \rightarrow 1 \}$, we see from Figure \ref{tableAFW2} that the intervals $\AF(\W_1)$ and $\AF(\W_2)$ both contain $N_5$ as a sublattice, namely: 

\begin{figure}[H]\label{tab}
    \centering
    \scalebox{.75}{\input{n5inn5_2}}
    \label{fig:n5n5}
\end{figure}

As $\AF(\W_1)$ and $\AF(\W_2)$ are sublattices of $\mathrm{Tr}(N_5)$, this also implies that $N_5$ is a sublattice of $\mathrm{Tr}(N_5)$.

\end{proof}

\begin{remark}
We could of course directly state that  $\mathrm{Tr}(N_5)$ contains $N_5$ as a sublattice, for example via the examples of $\AF(\W_1)$ or $\AF(\W_2)$. Another instance of $N_5$ as a sublattice of $\mathrm{Tr}(N_5)$ is shown in Figure \ref{fig:penta}.
\begin{figure}[H]
    \centering
    \scalebox{.75}{\input{pentpent}}
    \caption{A copy of $N_5$ inside of $\mathrm{Tr}(N_5)$}
    \label{fig:penta}
\end{figure}
However, as the images of $\mathrm{Tr}(N_5)$ in Figures \ref{fig:TSN5} and \ref{fig:gentransfer lattice} also show, the lattice structure of $\mathrm{Tr}(N_5)$ is complicated, so this is not so easily found from direct inspection.
\end{remark}

The last proposition alludes to a more general statement. If a lattice $\mathcal{P}$ contains $[2]$ as a sublattice, then it is easily verified that the lattice $\mathrm{Tr}(\mathcal{P})$ contains $\mathrm{Tr}([2])\cong N_5$ as a sublattice, see also \cite[Proposition 1.1]{LR24}. 
In particular, if $\mathcal{P}$ is nonmodular, then $\mathrm{Tr}(\mathcal{P})$ is always nonmodular, but as the example $\mathcal{P}=[n]$ shows, the converse does not hold. 
Another notable point for $\mathcal{P}=N_5$ is that when $\AF(\W)$ is nonmodular, then $\W$ is both a transfer and a cotransfer system, but the converse does not hold.

\section{Bousfield localization}\label{sec:bousfield}
Working with Bousfield localization on a finite lattice of course means that we do not need to pay attention to any of the set-theoretic existence questions that usually arise. We refer to \cite[Chapter 7.1]{FOSHT} and \cite[Chapter 3]{Hirschhorn} for the general definitions and only summarize what is required for our purposes.

\emph{Left Bousfield localization $L_S$ at a set $S$} changes an existing model structure $\W, \Cof, \F$ in the following way:
\begin{itemize}
\item $L_S(\W)$ is the smallest weak equivalence set containing $S$, the pushouts of $S$, and $\W$,
\item the cofibrations remain the same, i.e. $L_S(\Cof)=\Cof$.
\end{itemize}
As a consequence, by the lifting axiom of model structures, $L_S(\AF)=\AF$, $L_S(\F) \subseteq F$ and $L_S(\AC) \supseteq \AC$. We will specify what we mean by the ``smallest'' weak equivalence set further along the way.

Dually, \emph{right Bousfield localization $R_S$ at a set $S$} changes an existing model structure $\W, \Cof, \F$ in the following way:
\begin{itemize}
\item $R_S(\W)$ is the smallest weak equivalence set containing $S$, the pullbacks of $S$ and $\W$,
\item the fibrations remain the same, i.e. $R_S(\F)=\F$.
\end{itemize}
Consequently, $\AC = R_S(\AC)$, $\AF \subseteq R_S(\AF)$ and $\Cof \supseteq R_S(\Cof)$. 

Let us specify how the weak equivalences on an underlying lattice change under left and right localization. We concentrate on the case where the set $S$ consists of a single morphism $f$. For both left (and right) localization, we start by adding $f$ to $\W$ and then keep adding the necessary morphisms until we reach a set containing $\W$, $f$, and the pushouts (resp. pullbacks of $f$), and which satisfies the conditions of Theorem \ref{thm:legalW}. We only describe $R_f(\W)$; $L_f(\W)$ is constructed dually using $\AC$ instead of $\AF$ and pushouts instead of pullbacks. More details for both constructions are given in \cite[Section 2]{CGMNRT}.

\pagebreak

\begin{construction}\label{con:wchanges}
{From $\W$ to $R_f(\W)$}:

\begin{enumerate}
\item Establish $\AF_{(0)} := \AF, \,\,\, S_{(0)}:=\{f\}$.
\item Define $\AF_{(n)} := \langle \AF_{(n-1)} \cup S_{(n-1)} \rangle_{TS}$.
\item Take $\W_{(n)}$ to be the closure of $\AF_{(n)} \circ \AC$ under the two-out-of-three property.
\item Set $S_{(n)}$ to be the set of arrows in $\W_{(n)} \backslash \W_{(n-1)}$ and continue with Step (2).
\end{enumerate}
This process either terminates or stabilizes as we are in a finite lattice, and the resulting set is the weak equivalence set $\W_N=R_f(\W)$ for some index $N$.
\end{construction}

Since we present the data of a model category in terms of its weak equivalences and acyclic fibrations, with this construction, we know how the weak equivalences change for both left and right localization. For left localization, we also know that the acyclic fibrations remain unchanged. 
For right localization, the acyclic fibrations are changed by the addition of the following set, see \cite[Theorem 2.16]{CGMNRT}

\begin{theorem}\label{thm:goldenarrows}
For right Bousfield localization on a lattice at a short arrow $f$, the acyclic fibrations become $R_f(\AF)=\left<\AF \cup \Gamma_f\right>_{TS}$
where 
    \[
\Gamma_f = \{    s \rightarrow t \, \vert \, s \in S(\sigma) , t \in T(\sigma) \,\,\mbox{for some short arrow $\sigma \in R_f(\W)\setminus \W$}  \}
\]
with 
\begin{eqnarray}
T(\sigma) = \{ y \,\,|\,\, \mbox{$y$ is weakly equivalent to the target of $\sigma$ before localization}, \nonumber\\ \mbox{and $y$ is maximal with this property}\} \nonumber
\end{eqnarray}
and 
\begin{eqnarray}
S(\sigma)=\{y \,\,|\,\, \mbox{$y$ is weakly equivalent to the source of $\sigma$ before localization}, \,\, y \leq t\,\, \nonumber\\ \mbox{for some $t \in T(\sigma)$, and $y$ is maximal with these properties}\}. \nonumber
\end{eqnarray} 
The set $\Gamma_f$ is called the \emph{Golden Arrows}.
\end{theorem}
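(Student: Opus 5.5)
The plan is to use the characterization of right Bousfield localization by its fibrations. Since $R_f(\F)=\F$, the new acyclic fibrations are forced to be $R_f(\AF)=\F\cap R_f(\W)$, and the new weak equivalences $R_f(\W)$ are already determined by Construction \ref{con:wchanges}. So the task reduces to proving
\[
\F\cap R_f(\W)=\left<\AF\cup\Gamma_f\right>_{TS}.
\]
I will prove the two inclusions separately. Throughout I use that $\AC$ is unchanged, that $\F=\AC^\boxslash$, and that $\AC$ is a cotransfer system and hence closed under pushouts.

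For $\supseteq$, the left side is an intersection of transfer systems, namely $\F$ and the acyclic fibrations of the localized model structure, so it is itself a transfer system. It contains $\AF$. It therefore suffices to show that each golden arrow $s\to t$ lies in both $\F$ and $R_f(\W)$.

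\emph{Fibration.} Take a lifting square against some $a\to b$ in $\AC$, so $a\le s$ and $b\le t$. The pushout $s\to s\vee b$ of $a\to b$ lies in $\AC\subseteq\W$, so $s\vee b$ is weakly equivalent to $s$, and hence to the source of $\sigma$. We also have $s\vee b\le t$. By the maximality of $s$ in $S(\sigma)$ this forces $s\vee b=s$, i.e. $b\le s$, which is exactly the lift. The same maximality argument, applied to $T(\sigma)$, is what makes the choice of targets canonical.

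\emph{Weak equivalence.} Since $s\sim\mathrm{source}(\sigma)$ and $t\sim\mathrm{target}(\sigma)$ in $\W$, and $\sigma\in R_f(\W)$, the elements $s$ and $t$ lie in the same connected component of $R_f(\W)$. I will then use that $R_f(\W)$ is decomposable and satisfies 2-out-of-3, together with meets and joins in the lattice, to conclude $s\to t\in R_f(\W)$. Concretely, I will pass through $s\vee\mathrm{source}(\sigma)$ and $t\vee\mathrm{target}(\sigma)$ using the pushout property of $\AC$ and the factorization $\W=\AF\circ\AC$.

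For $\subseteq$, take $g\colon x\to y$ in $\F\cap R_f(\W)$ with $g\notin\W$ (otherwise $g\in\AF$ and there is nothing to show).
\begin{enumerate}
\item Factor $g$ into short arrows. Since $g\notin\W$, at least one short piece is a new short arrow $\sigma\in R_f(\W)\setminus\W$.
\item Replace $y$ by an element $t\in T(\sigma)$ above it, reached through old weak equivalences, and $x$ by the corresponding maximal $s\in S(\sigma)$ with $s\le t$.
\item Show that $g$ is obtained from $s\to t$ and arrows of $\AF$ by pullback and composition. The intended mechanism is to pull back $s\to t$ along $y\to t$, which yields an arrow with target $y$, and then use that $g$ is a fibration. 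By the same maximality-and-pushout argument as above, $g$ has the right lifting property against the $\AC$-part of the old factorization, and this lets me identify the pieces with pullbacks of golden arrows composed with arrows of $\AF$.
\end{enumerate}

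The main obstacle is step 3 of this inclusion. In a nonmodular lattice such as $N_5$ the pullback of $s\to t$ along $y\to t$ need not be short and need not land on $x$. I will try an induction on the length of $g$, handling one new short arrow at a time and composing with old acyclic fibrations in between. The hypothesis that $f$ is short, together with Construction \ref{con:wchanges} adding new short arrows stage by stage, should make this induction go through.
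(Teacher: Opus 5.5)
The paper does not prove this statement; it quotes it from \cite[Theorem 2.16]{CGMNRT}, so your proposal has to stand on its own. Your reduction to $\F\cap R_f(\W)=\langle\AF\cup\Gamma_f\rangle_{TS}$ is correct, and so is your lifting argument that every golden arrow is a fibration.

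For the claim $s\to t\in R_f(\W)$, decomposability and 2-out-of-3 are not enough. In general they do not make comparable objects of one component weakly equivalent. Components also need not be closed under joins, so passing through $s\vee\mathrm{source}(\sigma)$ is not justified. The clean fix uses that $R_f(\W)$ is the class of weak equivalences of a model structure on a poset. Hence it is saturated and the homotopy category is thin, so comparable objects in one $R_f(\W)$-component are joined by a weak equivalence.

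The genuine gap is the inclusion $\subseteq$, which carries all the content, and your steps 2--3 fail as stated.

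First, splitting $g$ along an arbitrary short factorization does not keep the pieces inside $\F\cap R_f(\W)$ or $\AF$. In the paper's $N_5$ computation at $f\colon C\to 1$, the map $g=0\to 1$ is a new acyclic fibration. But along $0\to A\to C\to 1$ the old piece $A\to C$ is an acyclic cofibration, and the tail $A\to 1$ is not a fibration.

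More seriously, the mechanism of pulling back a golden arrow $s\to t$ along $y\to t$ is not available in general. On $[2]\times[1]$, let $\W$ be generated by $(1,0)\to(2,0)$, $(1,0)\to(1,1)$ and $(0,0)\to(0,1)$. Then $\AF=\W$ is forced and every map is a fibration. Right localizing at $f\colon(0,0)\to(1,0)$ gives
$T(\sigma)=\{(2,0),(1,1)\}$, $S(\sigma)=\{(0,1)\}$ and $\Gamma_f=\{(0,1)\to(1,1)\}$, while $g\colon(0,0)\to(2,0)$ lies in $R_f(\AF)$.
Here the only $t\in T(\sigma)$ above $y=(2,0)$ is $(2,0)$ itself. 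No element of $S(\sigma)$ lies below it, and no golden arrow has target above $y$, so your step 2 cannot be carried out. The theorem survives only through a detour: $g$ is the old acyclic fibration $(1,0)\to(2,0)$ composed with $(0,0)\to(1,0)$, the pullback of $(0,1)\to(1,1)$ along $(1,0)\to(1,1)$. That is, one must route through a different maximal element of the target component.

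A sound first reduction is to split at maximal elements. Let $m$ be maximal in the old component of $x$ inside $[x,y]$. Then $x\to m\in\AF$, by lifting its $\AC$-part against $g$. Moreover $m\to y\in\F\cap R_f(\W)$, and it is the pullback of $s\to t$ along $y\to t$, where $t\geq y$ is maximal in its old component and $s$ is maximal in the old component of $m$ inside $[m,t]$. As the example shows, such $s\to t$ need not be golden. Proving that these arrows are nonetheless generated by $\AF\cup\Gamma_f$, including when several old components are crossed, is precisely the step your proposal leaves open.
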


In other words, we look at the weak equivalence components before localization, observe which of those are joined after localization and add arrows from the maximal elements of the source components to the maximal elements of the target components with respect to the old weak equivalences.

\begin{example} To visualize this result, we look at Example 2.13 from \cite{CGMNRT}.
Consider the following model structure on the lattice $[1] \times [1]$, where the weak equivalences are given by the shaded area and every morphism is a fibration, and thus $\W=\AF$.
This is the only model structure on $[1] \times [1]$ with this weak equivalence set. 
\begin{figure}[H]

\begin{tikzpicture}[scale=1.1]
\draw[red!30, fill, rounded corners] (0.6, 0.6) rectangle (2.4, 1.4); 
\draw[red!30, fill, rounded corners] (1.6, 1.6) rectangle (2.4, 2.4); 
\draw[red!30, fill, rounded corners]  (0.6, 0.6)rectangle (1.4,2.4); 
 \foreach \x in {1,2}
 \foreach \y in {1,2}
    \fill (\x,\y) circle (1mm);
    \node at (1,2) (Sf) {}; 
    \node at (2,2) (Tf) {}; 
    \node at (2,1) (B) {};
    \node at (1,1) (C) {};
  \draw[line width=.5mm, black, -{stealth}] (Sf) edge node[above]{$f$} (Tf);
   \draw[thick, ->>, black] (C) edge (Sf);
    \draw[thick, ->>, black] (C) edge (B);
  
\end{tikzpicture}
\end{figure}
We would like to right localize at $f: (1,0) \rightarrow (1,1)$. If $f$ becomes a weak equivalence, then the 2-out-of-3 axiom implies that $R_f\W=\mbox{ALL}.$

The two shaded weak equivalence components get joined in localization, so the Golden Arrows go from the maximal vertices of the first component to the maximal (and only) element of the second component. 
The Golden Arrows are denoted by $g_1$ and $g_2$ in the picture below.
\begin{figure}[H]
\hspace{0.25cm}
\begin{tikzpicture}[scale=1.1]
\draw[red!30, fill, rounded corners] (0.6, 0.6) rectangle (2.4, 1.4); 
\draw[red!30, fill, rounded corners] (1.6, 1.6) rectangle (2.4, 2.4); 
\draw[red!30, fill, rounded corners]  (0.6, 0.6)rectangle (1.4,2.4); 
 \foreach \x in {1,2}
 \foreach \y in {1,2}
    \fill (\x,\y) circle (1mm);
    \node at (1,2) (Sf) {}; 
    \node at (2,2) (Tf) {}; 
    \node at (2,1) (Tp) {};
      \node at (2,1) (B) {};
    \node at (1,1) (C) {};
   \draw[thick, ->>, black] (C) edge (Sf);
    \draw[thick, ->>, black] (C) edge (B);
    \draw[line width=.5mm, black, -{stealth}] (Sf) edge node[below]{$f$} (Tf);
    \draw[line width=2.5pt, gold, -{stealth}, dashed] (Sf) edge[bend left] node[above]{\color{black}$g_1$} (Tf);
    \draw[line width=2.5pt, gold, -{stealth},dashed] (Tp) edge[bend right] node[right]{\color{black}$g_2$} (Tf);
\end{tikzpicture}
\end{figure}

Therefore, the new acyclic fibrations are obtained by adding the Golden Arrows to the old acyclic fibrations and closing the result under transfer system operations, giving
\[
R_f(\AF)=\left<\AF \cup \Gamma_f\right>_{TS}=\mbox{ALL}.
\]
\end{example}

We will now study how the model structures on $N_5$ given in Tables \ref{tableAFW3} and \ref{tableAFW} are interlinked by Bousfield localization. 
One key question about all Bousfield localizations on a fixed underlying lattice is if the poset of all model structures together with the left and right localizations is connected. In other words, can any model structure be obtained from the trivial model structure by a sequence of left and right localizations?

For the 23 model structures on the square $[1] \times [1]$, one can observe that the answer is positive by direct calculation \cite[Figure 5]{CGMNRT}. By \cite[Theorem 5.12]{BOOR}, this is also true for any total order $[n]$. 
For an arbitrary grid $[n] \times [m]$, the answer is already more complex. It was shown in \cite[Section 4]{CGMNRT} that the claim is false in general, but that it still holds when restricted to \emph{saturated} transfer systems (i.e. those transfer systems satisfying the 2-out-of-3 condition). For our nonmodular lattice $N_5$ we reach the following conclusion.

\begin{theorem}\label{thm:allthelocals}
Every model category structure on $N_5$ is a sequence of left and right Bousfield localizations of the trivial model structure.
\end{theorem}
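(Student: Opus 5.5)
The proof will be a finite verification organized so as to avoid checking all 70 model structures one at a time. The trivial model structure has $\W$ consisting only of identities, so necessarily $\AF=\W$. We write a model structure as a pair $(\W,\AF)$ with $\AF\in\AF(\W)=[\T_{min}(\W),\T_{max}(\W)]$ as tabulated in Figures \ref{tableAFW3}, \ref{tableAFW} and \ref{tableAFW2}. The argument has two parts.

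\emph{Moving within an interval.} Left Bousfield localization fixes $\AF$ and enlarges $\W$, while right localization enlarges both. Within a fixed $\W$ we cannot localize, so the goal is to show that every pair $(\W,\T)$ with $\T\in[\T_{min},\T_{max}]$ is reached by some localization from a model structure with strictly smaller weak equivalences. The natural induction is on $|\W|$, using the 22 weak equivalence sets ordered by inclusion. Suppose $(\W,\T)$ is given. If $\T\subsetneq\W$ has a "$\W$-part" not forced by $\T$, then we look for $\W'\subsetneq\W$ with $\T\in\AF(\W')$ and $L_S(\W')=\W$ for a suitable set $S$. In that case $(\W,\T)=L_S(\W',\T)$, and by induction $(\W',\T)$ is reachable. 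The candidate is $\W'=$ the smallest wide decomposable subcategory containing $\T$ and $\T_{min}$ data. Since by the Proposition above every wide decomposable subcategory is a weak equivalence set, the closure of $\T$ under decomposition is itself a weak equivalence set, and $\T\subseteq\T_{max}(\W')$ is automatic. The point to check is $\T\supseteq\T_{min}(\W')$ and that $L$ at the missing arrows of $\W\setminus\W'$ yields exactly $\W$. Left localization only adds pushouts, and pushouts in $N_5$ are explicitly listed in Table \ref{tab:pushoutspullbacks}, so this is a short check per case.

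\emph{Handling the remaining cases.} Pairs not obtainable as left localizations will be treated as right localizations $R_f$ at short arrows $f$. For these we use Construction \ref{con:wchanges} together with Theorem \ref{thm:goldenarrows}. The new $\AF$ is $\langle\AF\cup\Gamma_f\rangle_{TS}$, and the Golden Arrows depend only on the weak-equivalence components before localization, so they are easy to read off the pictures. In particular, the case $\T=\W$ with $\W$ a transfer system arises as a right localization of $(\W\setminus\{f\},\,\cdot\,)$ for a suitable short $f$. Likewise, each lower endpoint $\T_{min}(\W)$ should be obtained by a left localization from a structure with smaller $\W$ and the same $\AF$.

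I would organize the verification as a diagram, analogous to \cite[Figure 5]{CGMNRT}, drawing all 70 structures with the localization arrows found.

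\emph{Main obstacle.} The main obstacle is the nonmodular intervals $\AF(\W_1)$ and $\AF(\W_2)$ from Proposition \ref{prop:afnonmodular}. There, the pushout of a short arrow is no longer short ($0\to B$ pushes out to $A\to1$, which factors through $C$), so both the left-localization closure and the Golden Arrow computation can overshoot. As a result, the middle elements of these pentagons may not be hit by the obvious moves. For these I would first try right localization at each short arrow from every structure whose $\W$ is a maximal proper sub-weak-equivalence set of $\W_i$, and compute $\Gamma_f$ explicitly. If that fails, I would try left localizing from $(\W',\T)$ with $\W'$ the decomposable closure of $\T$, as in the first part.
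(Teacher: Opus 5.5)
Your reduction is sound and is implicitly the paper's. Localizations only enlarge $\W$ and $\AF$, a localization that leaves $\W$ unchanged changes nothing, and the trivial structure is the only one whose $\W$ consists of identities. So it suffices to exhibit every nontrivial structure as a left or right localization of one with strictly smaller $\W$. For this statement, however, that exhibition \emph{is} the proof. The paper's argument consists of carrying it out, using Construction~\ref{con:wchanges} and the Golden Arrows, and recording the result in Figure~\ref{fig:tree}. Your proposal never does this: no structure is actually produced as a localization. For the cases you yourself call hardest, you only list what you would try, and what you would try if that fails, with no argument that either succeeds. As it stands this is a search strategy, not a proof.

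The general principles you offer cannot replace the case check, because they are false as stated.
\begin{itemize}
\item Lower endpoints need not be left localizations. For $\W=\{0\to B\}$ (similarly $\{0\to A\}$), every nontrivial pushout leaves $\W$, so $\K_{max}$ is trivial and $\T_{min}=\W$. Since $\AF\subseteq\W'\subsetneq\W$ is then impossible, $(\W,\W)$ arises only by right localization, namely $R_{0\to B}$ of the trivial structure with Golden Arrow $0\to B$.
\item $\W\setminus\{f\}$ is not decomposable when $f$ is a factor of a longer arrow in $\W$. For $\W=\mbox{ALL}$, every short arrow divides $0\to 1$, so $(\mbox{ALL},\mbox{ALL})$ must be obtained differently. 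One way is $R_{0\to A}(\W_1,\W_1)$: its only Golden Arrow is $B\to 1$, and adding it to $\W_1$ generates all arrows.
\end{itemize}

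The nonmodular intervals you worry about yield to exactly this kind of direct computation. For instance,
\[
(\W_1,\{0\to B,A\to C,A\to 1\})=R_{A\to C}(\{C\to 1,0\to B\},\{0\to B\}),
\]
where the Golden Arrow is the non-short arrow $A\to 1$. Similarly, $(\W_1,\{0\to B,A\to C\})$ and $(\W_1,\{0\to B,C\to 1\})$ are the left localizations at $C\to 1$ and at $A\to C$, respectively, of the structures whose $\W=\AF$ equals those transfer systems. What is missing is this explicit computation for each of the 70 structures.
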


\begin{proof}
We verify this by direct calculation, using Construction \ref{con:wchanges} to see the changes in weak equivalences, and the Golden Arrows from Theorem \ref{thm:goldenarrows} to see the changes in the transfer system of acyclic fibrations. We note again that left localization keeps the acyclic fibrations as they are. We also use Tables \ref{tableAFW3} and \ref{tableAFW} to keep track of our workings. We will arrive at the diagram Figure \ref{fig:tree}
at the end of this proof. 

Again, we depict a model structure as a pair $\W \supset \AF$. We display the weak equivalences $\W$ as the shaded areas, and the transfer system $\AF \subset \W$ as the double headed arrows. We denote left localizations by wavy arrows and right localizations by solid arrows. 

Rather than only presenting the final image as the proof, we will talk the reader through its inception and verification.
In practice, each arrow from a model structure $\W\supset\AF$ to a $L_f(\W)\supset L_f(\AF)$ or a $R_f(\W)\supset R_f(\AF)$ is not too difficult to calculate, especially as Construction \ref{con:wchanges} does not have to be iterated in our case- the main challenge is indeed the bookkeeping. 

First, we notice that it is only necessary to include localizations at short arrows in the diagram, as localization at decomposable arrows can always be written as composition of localization at short arrows. 
We also do not draw all possible localization arrows into the diagram to keep it as visually simple as possible for our purposes.
Next, we can simplify our work by proceeding with localizing only at those arrows that do not have any nontrivial pushouts (for left localization) or pullbacks (for right localization) that are not already in $\W$. 

For left localizations, we add a short arrow and its pushouts to $\W$ and close under the 2-out-of-3 property. The transfer systems of acyclic fibrations does not change. For right localization, we add a short arrow and its pullbacks to $\W$. The transfer system of acyclic fibrations changes via the addition of the Golden Arrows.
To illustrate how this works, we present a couple of sample calculations.

Consider the model structure in the first image, with the weak equivalences depicted as the shaded regions and the acyclic fibrations drawn as the double-tipped arrows. Left or right localizing at $f: C \rightarrow 1$ adds $f$ to the weak equivalences, so closing under the 2-out-of-3 property we end up with $\W=\mbox{ALL}$ after localizing. The transfer system of acyclic fibrations remains unchanged under left localization.

 \begin{figure}[h]
 
\begin{tikzpicture} 
\draw[red!30, fill, rounded corners] (1.126, 1.663) rectangle (1.726, 2.263);
  \draw[red!30, fill, rounded corners]  (-0.3,-0.15) -- (-0.3,1.25+0.85) -- (1.4265848+0.3,-0.4635254-0.3)--cycle;
    \draw[red!30, fill, rounded corners] (2.3082627-0.1, 0.75+0.3)-- (2.3082627+0.5, 0.75+0.2)-- (1.4265848+0.2, -0.4635254-0.3) -- (1.4265848-0.4, -0.4635254-0)-- cycle;
    \fill (0,0) circle (1mm);
    \fill (0,1.5) circle (1mm);
    \fill (1.4265848,1.9635254) circle (1mm);
    \fill (2.3082627, 0.75) circle (1mm);
    \fill (1.4265848,-0.4635254) circle (1mm);  
\node (X) at (1.4265848,-0.4635254) {};
\node (B) at (2.3082627, 0.75) {};
\node (A) at (0,0) {};
\node (C) at (0,1.5) {};
\node (Y) at (1.4265848,1.9635254) {};
\draw[thick, ->>](X) edge (A);
\draw[thick, ->>](X) edge (B);
\draw[thick, ->>](X) edge (C);
\draw[dashed, thick, ->](C) edge node[above]{$f$} (Y);
\node (S) at (2.3082627+1, 0.75) {};
\node (J) at (2.3082627+3, 0.75) {};
\draw[thick, ->](S) edge node[above]{$L_f$} (J);
\end{tikzpicture}
\begin{tikzpicture} 
 \draw[red!30, line width=2pt, rounded corners]  (-0.3,-0.15) -- (-.3,1.5+0.2) -- (1.4265848+0.1,1.9635254+ 0.3)-- (2.3082627+0.3, 0.75)-- (1.4265848+0.15,-0.4635254-0.3)--cycle;
    \fill (0,0) circle (1mm);
    \fill (0,1.5) circle (1mm);
    \fill (1.4265848,1.9635254) circle (1mm);
    \fill (2.3082627, 0.75) circle (1mm);
    \fill (1.4265848,-0.4635254) circle (1mm);  
\node (X) at (1.4265848,-0.4635254) {};
\node (B) at (2.3082627, 0.75) {};
\node (A) at (0,0) {};
\node (C) at (0,1.5) {};
\node (Y) at (1.4265848,1.9635254) {};
\draw[thick, ->>](X) edge (A);
\draw[thick, ->>](X) edge (B);
\draw[thick, ->>](X) edge (C);
\end{tikzpicture}
  \end{figure}
     
If we right localize instead of left localize the first model structure at $f$, we again obtain $\W=\mbox{ALL}$, but the transfer system $\AF$ changes by the addition of the Golden Arrows indicated in the second image below. Closing under pullbacks and composition gives us the transfer system $R_f(\AF)$ drawn in the third image.
 
\begin{figure}[h]
\begin{tikzpicture} 
\draw[red!30, fill, rounded corners] (1.126, 1.663) rectangle (1.726, 2.263);
  \draw[red!30, fill, rounded corners]  (-0.3,-0.15) -- (-0.3,1.25+0.85) -- (1.4265848+0.3,-0.4635254-0.3)--cycle;
    \draw[red!30, fill, rounded corners] (2.3082627-0.1, 0.75+0.3)-- (2.3082627+0.5, 0.75+0.2)-- (1.4265848+0.2, -0.4635254-0.3) -- (1.4265848-0.4, -0.4635254-0)-- cycle;
    \fill (0,0) circle (1mm);
    \fill (0,1.5) circle (1mm);
    \fill (1.4265848,1.9635254) circle (1mm);
    \fill (2.3082627, 0.75) circle (1mm);
    \fill (1.4265848,-0.4635254) circle (1mm);  
\node (X) at (1.4265848,-0.4635254) {};
\node (B) at (2.3082627, 0.75) {};
\node (A) at (0,0) {};
\node (C) at (0,1.5) {};
\node (Y) at (1.4265848,1.9635254) {};
\draw[thick, ->>](X) edge (A);
\draw[thick, ->>](X) edge (B);
\draw[thick, ->>](X) edge (C);
\draw[dashed, thick, ->](C) edge node[above]{$f$} (Y);
\node (S) at (2.3082627+1, 0.75) {};
\node (J) at (2.3082627+2.5, 0.75) {};
\draw[thick, ->](S) edge node[above]{$\Gamma_f$} (J);
\end{tikzpicture}
\begin{tikzpicture} 
 \draw[red!30, line width=2pt, rounded corners]  (-0.3,-0.15) -- (-.3,1.5+0.2) -- (1.4265848+0.1,1.9635254+ 0.3)-- (2.3082627+0.3, 0.75)-- (1.4265848+0.15,-0.4635254-0.3)--cycle;
    \fill (0,0) circle (1mm);
    \fill (0,1.5) circle (1mm);
    \fill (1.4265848,1.9635254) circle (1mm);
    \fill (2.3082627, 0.75) circle (1mm);
    \fill (1.4265848,-0.4635254) circle (1mm);  
\node (X) at (1.4265848,-0.4635254) {};
\node (B) at (2.3082627, 0.75) {};
\node (A) at (0,0) {};
\node (C) at (0,1.5) {};
\node (Y) at (1.4265848,1.9635254) {};
\draw[thick, ->>](X) edge (A);
\draw[thick, ->>](X) edge (B);
\draw[thick, ->>](X) edge (C);
\draw[line width=2.5pt, gold, -{stealth}, dashed](B) edge (Y);
\draw[line width=2.5pt, gold, -{stealth}, dashed](C) edge (Y);
\node (S) at (2.3082627+1, 0.75) {};
\node (J) at (2.3082627+2.5, 0.75) {};
\draw[thick, ->](S) edge node[above]{$\langle \AF \cup \Gamma_f \rangle_{TS}$} (J);
\end{tikzpicture}
\begin{tikzpicture} 
 \draw[red!30, line width=2pt, rounded corners]  (-0.3,-0.15) -- (-.3,1.5+0.2) -- (1.4265848+0.1,1.9635254+ 0.3)-- (2.3082627+0.3, 0.75)-- (1.4265848+0.15,-0.4635254-0.3)--cycle;
    \fill (0,0) circle (1mm);
    \fill (0,1.5) circle (1mm);
    \fill (1.4265848,1.9635254) circle (1mm);
    \fill (2.3082627, 0.75) circle (1mm);
    \fill (1.4265848,-0.4635254) circle (1mm);  
\node (X) at (1.4265848,-0.4635254) {};
\node (B) at (2.3082627, 0.75) {};
\node (A) at (0,0) {};
\node (C) at (0,1.5) {};
\node (Y) at (1.4265848,1.9635254) {};
\draw[thick, ->>](X) edge (A);
\draw[thick, ->>](X) edge (B);
\draw[thick, ->>](X) edge (C);
\draw[thick, ->>](X) edge (Y);
\draw[thick, ->>](B) edge (Y);
\draw[thick, ->>](C) edge(Y);
\end{tikzpicture}

    \label{fig:placeholder}
\end{figure}

Putting all those calculations together, we arrive at the following.

\begin{figure}[H]\centering \scalebox{.55}{\input{Massivertree}} \caption{All 70 model structures of $N_5$ can be obtained from the trivial model structure via a sequence of left and right Bousfield localization.}\label{fig:tree} \end{figure}

\end{proof}

We can therefore conclude that the nonmodular lattice $N_5$ is well-behaved in its homotopy theory, which initially was a surprise as nonmodularity often presents itself as a practical challenge. In particular, we knew that the property proved in Theorem \ref{thm:allthelocals} is true for the square $[1] \times [1]$ but not the next larger grid $[2] \times [1]$, and it will be interesting to consider how $N_5$ fits into this context. Further, we do not have any indication at this point whether the theorem will hold in full for other nonmodular lattices, or hold with additional restrictions, such as only considering saturated transfer systems as $\AF$.


\newcommand{\etalchar}[1]{$^{#1}$}

\end{document}